\newcommand{\R}{\mathbb R}
\newcommand{\N}{\mathbb N}
\def\eps{\varepsilon}
\newtheorem{theorem}{Theorem} 
\newtheorem{lemma}{Lemma}
\newdefinition{obs}{Remark}
\newtheorem{problem}{Problem}
\newcommand{\ve}{\textbf{vect}}
\newcommand{\pt}{\tilde{p}}
\DeclareMathOperator*{\argmin}{arg\,min}
\begin{document}
	\title{A gradient system approach for\\ Hankel structured low-rank approximation}
	
	\author[1,2]{Antonio Fazzi\corref{cor1}}
	\ead{antonio.fazzi@gssi.it}
	\author[1]{Nicola Guglielmi} \ead{nicola.guglielmi@gssi.it}
	\author[2]{Ivan Markovsky} \ead{imarkovs@vub.ac.be}
	\cortext[cor1]{Corresponding author}
	\address[1]{Gran Sasso Science Institute (GSSI), Viale F. Crispi 7, 67100 L'Aquila, Italy}
	\address[2]{Vrije Universiteit Brussel (VUB), Department ELEC, Pleinlaan 2, 1050 Brussels, Belgium}

\begin{abstract}
Rank deficient Hankel matrices are at the core of  several applications. 
However, in practice, the coefficients of these matrices are noisy due to e.g. measurements 
errors and computational errors, so generically the involved matrices are full rank. 
This motivates the problem of Hankel structured low-rank approximation. 
Structured low-rank approximation problems, in general, do not have a global and efficient  
solution technique. In this paper we propose a local optimization approach based on a two-levels 
iteration. Experimental results show that the proposed algorithm usually achieves good accuracy 
and shows a higher robustness with respect to the initial approximation, compared to alternative 
approaches. 
\end{abstract}

\begin{keyword}
	Hankel matrix \sep low-rank approximation \sep gradient system \sep structured matrix perturbation
	
	\MSC 15B99 \sep 41A29  \sep 65Y20  \sep 68W25
\end{keyword}

\maketitle

\section{Introduction}
\subsection{Preliminaries}
A Hankel matrix $H \in \R^{m \times n}$ is a structured matrix 
where the entry  on the $i$-th row and the $j$-th column depends only on the sum $i+j$. 
Hankel matrices can be associated in a natural way to vectors or time series.
For a given $m \in \N$, let $H_m \in \R^{m \times n}$ ($m \leq n$)  be the Hankel matrix built from the 
real numbers $p_1, p_2, \dots, p_T$ (with $T=n+m-1$) as follows
$$
H_m(p)=\begin{pmatrix}
p_1 & p_2 & \cdots & p_n \\
p_2 & p_3 & \cdots & p_{n+1} \\
\vdots & \vdots &   & \vdots \\
p_m & p_{m+1} & \cdots & p_T
\end{pmatrix}
$$

We denote by $\mathcal{H}^{m\times n}$ the subspace of $m \times n$ Hankel matrices.
Given a vector
\[p = (p_1, \dots, p_T)^{\top},\]
we define the Hankel matrix 
\[H=H_m(p) \in \mathcal{H}^{m \times (T-m+1)}.\]
Conversely, given a Hankel matrix
\[H \in \mathcal{H}^{m \times (T-m+1)},\]
we define the vector
\[\ve (H) =p = (p_1, \dots, p_T)^{\top}.\]

Hankel structured matrices arise in applications in different  areas of sciences, such as control theory, approximation and interpolation  problems, 
polynomials computations (see for example \cite{BiniPan,Peller2003}). Most applications involve the computation of the distance between a 
given Hankel matrix and a singular one (preserving the same structure).  
In the following we focus our attention on system identification of linear time-invariant models and polygons from moments reconstruction.  The 
structure preserving property is of interest both theoretically (because of the applications involving singular Hankel matrices) \cite{ParkLeiRosen, 
ShawPokalaKumaresan, ChuFunderlicPlemmons} and in practice (due to the presence of noise in real life problems) \cite{MarkovskySLRA}.

In this paper,  we analyze the problem of computing the structured distance to singularity in the case of  (scalar) Hankel matrices. Because of the association previously discussed between a Hankel matrix and the vector of its coefficients, we consider the following   formulation of the  problem 
\begin{equation}
\label{prob:problem1}
\min_{\substack{\pt \in \R^T \\   H_m(\pt)  \  \text{rank deficient}}} 
 \| p - \pt \|_w
\end{equation}
where the norm 
\begin{equation*}
\| p \|_w = \left( \sum_{i=1}^{T} w_i |p_i|^2 \right)^{1/2}. 
\end{equation*}
For example,  the weights can be chosen such that $\| p \|_w = \| H_m(p) \|_F$. 


\subsection{Main contribution of the paper}
The  approximation of a structured matrix by a matrix of lower rank which preserves the same structure is a classical problem in numerical linear algebra \cite{ChuFunderlicPlemmons,MarkovskySLRA}.  The constraint on the structure of the computed solution makes the problem harder in comparison with  unstructured low-rank approximation. Currently, there is no analytical solution neither standard solution strategies.
Looking at numerical schemes,  global optimization approaches are usually computationally expensive, hence the most common methods for solving  \eqref{prob:problem1} are local optimization  \cite{Markovskyvarproj} or convex relaxation methods \cite{Fazelthesis}.

 The algorithm we  propose here is based on the ideas presented in \cite{Fazzi2019}, where the approximate common factor problem is restated as a structured low-rank approximation problem for Sylvester matrices (block Toeplitz matrices).  
 On the other hand we take inspiration from the ideas presented in \cite{ButtaGuglielmiNoschese}, where the authors study the behavior of the eigenvalues of Toeplitz matrices under finite (structure preserving) perturbations. 
 The  results in \cite{Fazzi2019, ButtaGuglielmiNoschese} motivate the work presented in this paper because of the similarities between Toeplitz and Hankel matrices. 

 We  propose a double iteration  method for Hankel structured low-rank approximation. 
 According to the formulation in  \eqref{prob:problem1},  we start from a data vector $p$ (whose associated Hankel matrix is full rank) and  we aim at modifying 
the matrix $H_m(p)$ in order to make it singular, by adding a perturbation of the form $\epsilon \delta$ (where $\delta$ is a norm  $1$ vector, while $\epsilon$ is a scalar  measuring the norm of the perturbation).  
The two values of $\epsilon$ and $\delta$ are updated on two different independent levels:
 \begin{itemize}
 \item at the inner level we fix the norm of the perturbation $\epsilon$ and we look for the vector $\delta$  which minimizes the smallest singular value of  
 $H_m(p +   \epsilon \delta)$.   
 This is done by an ODE for $\delta$ which is given by the gradient system for the smallest singular value of the matrix;
 \item at the outer level we need to move the value of $\epsilon$  (increase the norm of the perturbation)  till the smallest singular value is zero. 
 \end{itemize}
 
 \paragraph*{Outline}
 The goal of the paper is to provide a numerical algorithm for the solution of  \eqref{prob:problem1} and to test it on the  applications of system identification and polygons from moments reconstruction in order to observe its performances.
 The application to identification of linear time-invariant dynamical models is presented in Section~\ref{sec:sysid} and the polygons from moments reconstruction is presented in Section~\ref{sec:polfrommoments}.
  In Section~\ref{sec:alg} we present the algorithm, describing in  details  the two levels iteration, and showing the monotonicity of the smallest singular value at the inner level. 
	We perform then some numerical tests in Section~\ref{sec:numtest} doing a comparison (whenever it is possible) with the SLRA package~\cite{UsevichMarkovsky2}.  
	A summary  and possible directions for future work are listed in  Section~\ref{sec:concl}.

\section{Applications}
\label{sec:appl}
We describe in this section some applications involving low-rank approximation of Hankel matrices.
\subsection{System Identification}
\label{sec:sysid}
Hankel matrices play a central role in system theory and identification. An important result states that the rank deficiency of a Hankel matrix built from some data 
$p = (p(1), p(2), \dots, p(T)) \in \R^T$  is equivalent with the fact that such data $p$ is an impulse response of a  linear time-invariant dynamical system of order equal to the rank of the matrix.
In particular it is known that a linear time-invariant dynamical system of  order $m$ can be defined through a difference equation  
 \cite[Theorem~7.2]{Markovskybook2006} of the form
\begin{equation}
\label{eq:diffeq}
R_0 p(t) + R_1 p(t+1) + \cdots + R_m p(t+m) = 0  \ \ \text{for all} \ t. 
 \end{equation}
where $R = (R_0, R_1, \dots, R_m) \neq 0$ is a vector of real numbers.
In the problem of system identification we are given a finite trajectory  of the system  $p = (p(1), p(2), \dots, p(T)) \in \R^T$ and the order $m$ and we aim to find   the generating model (defined by the vector of parameters $R$).
If we write equation \eqref{eq:diffeq} in matrix form we have
\begin{equation}
\label{model}
(R_0, R_1, \dots, R_m) 
H_{m+1}(p)
= 0,
\end{equation}
which means the Hankel matrix $H_{m+1}(p)$ is rank deficient. 

 We deduce that a necessary and sufficient condition for the time series $p$ to be generated by a linear time-invariant dynamical model is $H_{m+1}(p)$ to be rank deficient. However in practical applications  $p$ can be corrupted by noise, so we deal with a full rank Hankel matrix, and we are interested in computing the closest rank deficient Hankel matrix. The vector $R$ is then computed as the kernel of the approximating  matrix. 
 
Model \eqref{model} is used in several contexts, ranging from biomedical signal processing, vibrational analysis of dynamical structure, industrial process system identification, stochastic identification and telecommunications \cite{VanhuffelLemmerling, BultheelDeMoor, Demoor1993, Demoor1994, Demoor1997}.
 
 \subsection{Polygons from moments reconstruction}
 \label{sec:polfrommoments}
 The {\em polygons from moments problem} consists in reconstructing a (binary) simply connected and
 nondegenerate polygon from some complex quantities called
 \textit{moments}. The mathematical framework of the problem and its
 derivation can be found, e.g., in
 \cite{MilanfarVerghese,MilanfarPutinar}; we briefly summarize here how
 to restate the problem as a Hankel low-rank approximation problem.
 
 A polygon is reconstructed from  its $n$ vertices $z_1, \dots, z_n$, and each~$z_i$ is a complex number. For a given integer  number $N > 2n$, the so called \textit{complex moments} are defined as \cite{MilanfarVerghese}
 \begin{equation}
 \label{eq:complexmoments}
 \tau_k = \sum_{j=1}^n  a_j z_j^k  \ \ \ for \ k=0, \dots N
 \end{equation}
 where 
 \begin{equation*}
 a_j = \frac{2 A_j}{(z_j - z_{j-1})(z_j - z_{j+1})}, \ \ A_j = \frac{i}{4} \det \begin{pmatrix}
 z_{j-1} & \bar{z}_{j-1} & 1 \\ 
 z_j & \bar{z}_j & 1 \\
 z_{j+1} & \bar{z}_{j+1} & 1
 \end{pmatrix}
 \end{equation*}
 and we assume that the set of vertices is cyclic (so $z_0 = z_n, z_{n+1} = z_1$ and so on). 
 Equation \eqref{eq:complexmoments} can be written in matrix  form  as
 \begin{equation}
 \label{eq:complexmomvector}
 \begin{pmatrix}
 \tau_0 \\ \tau_1 \\ \vdots \\ \tau_N
 \end{pmatrix} = \begin{pmatrix}
 1  & 1 & \cdots & 1 \\ z_1 & z_2 & \cdots & z_n \\ \vdots & \vdots & \ddots & \vdots \\ z_1^N & z_2^N & \cdots & z_n^N
 \end{pmatrix} \begin{pmatrix}
 a_1 \\ a_2 \\ \vdots \\ a_n
 \end{pmatrix} \Longleftrightarrow  \tau = Z a
 \end{equation}
 Using Prony's method \cite{Hildebrand} it is possible to show that the set of vertices can be computed from the vector of complex moments $\tau$ in \eqref{eq:complexmomvector}. Define the polynomial 
 \begin{equation*}
 P(z) = \prod_{j=1}^n (z - z_j) = z^n + \sum_{j=1}^{n} p_j z^{n-j} 
 \end{equation*}
 and the vector $p=(p_n, p_{n-1}, \dots, p_1)$. In this way, the problem is equivalent to the computation of the vector $p$.
 Premultiplying equation~\eqref{eq:complexmomvector} by
 the following Toeplitz matrix
 \begin{equation*}
 K_{N+1}=\begin{pmatrix}
 p_n & p_{n-1} & \cdots &p_1 & 1 & &  \\
 & \ddots & \ddots & & \ddots & \ddots &  \\
 &  & p_n & p_{n-1} & \cdots &p_1 & 1
 \end{pmatrix}:
 \end{equation*}
 we get
 \begin{equation*}
 K_{N+1} \tau = K_{N+1} Z a = 0, 
 \end{equation*}
 where the last equality comes from  the definition of $P(z)$. 
 The identity $K_{N+1} \tau = 0$ can be then written as
 \begin{equation}
 \label{eq:polygonhlra}
 \begin{pmatrix}
 \tau_0 & \tau_1 & \cdots& \tau_{n} \\
 \tau_1 & \tau_2 & \cdots & \tau_{n+1} \\
 \vdots & \vdots &      & \vdots \\
 \tau_{N-n} & \tau_{N-n+1} & \cdots & \tau_N
 \end{pmatrix} \begin{pmatrix}
 p \\ 1
 \end{pmatrix} = 0. 
 \end{equation}
 Equation \eqref{eq:polygonhlra} shows that the Hankel matrix of the moments is rank deficient. 
 In realistic applications the measurements of the complex moments is affected by noise, so that 
 we expect the Hankel matrix $H_{N-n}(\tau)$ to be full rank. 
 By computing the closest rank deficient  Hankel matrix and its kernel, we can approximately 
 reconstruct the set  of vertices $z_1, \dots, z_n$.
 
 \begin{obs}
 In the application to polygons from moments reconstruction, the data are complex-valued.
 \end{obs}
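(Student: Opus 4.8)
The plan is to verify the remark by unwinding the construction of Section~\ref{sec:polfrommoments} and, more importantly, to record the consequence it has for the rest of the paper. The only datum that is complex by its nature is the geometry: a polygon lives in the plane, which we identify with $\C$, so its vertices satisfy $z_1,\dots,z_n\in\C$. Everything else is obtained from the $z_j$ through the explicit formulas \eqref{eq:complexmoments}--\eqref{eq:polygonhlra}, so it suffices to propagate this single fact through them.

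First I would look at the coefficients $a_j$. A short computation with the determinant defining $A_j$ shows that it equals $2\mathrm{i}$ times a real combination of the quantities $\operatorname{Im}(z_a\bar z_b)$, hence it is purely imaginary; therefore $A_j = \tfrac{\mathrm{i}}{4}\cdot(\text{purely imaginary}) \in \R$, and is in fact the signed area of the triangle with vertices $z_{j-1},z_j,z_{j+1}$, as expected. The coefficient $a_j = 2A_j/\big((z_j-z_{j-1})(z_j-z_{j+1})\big)$ is then a real number divided by a product of complex numbers, so $a_j\in\C$ for a generic configuration of vertices. Consequently each moment $\tau_k = \sum_{j=1}^n a_j z_j^k$ in \eqref{eq:complexmoments} is a $\C$-linear combination of powers of complex numbers, and the data vector $\tau = (\tau_0,\dots,\tau_N)$ of \eqref{eq:complexmomvector} lies in $\C^{N+1}$ rather than $\R^{N+1}$. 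The unknown is complex for the same reason: the entries of $p$ are, up to sign, the elementary symmetric functions of the complex roots $z_1,\dots,z_n$ of $P$, so $p\in\C^n$ unless the vertices happen to occur in conjugate pairs. Hence in \eqref{eq:polygonhlra} both the Hankel matrix $H_{N-n}(\tau)$ and the null vector $(p,1)^{\top}$ are genuinely complex, which is the content of the remark.

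The part I regard as the real substance, and the natural place for difficulty, is not this bookkeeping but the adjustment it forces on the method: in this application Problem~\eqref{prob:problem1} must be read over $\C^T$, the weighted norm $\|\cdot\|_w$ must be interpreted in its Hermitian form $\big(\sum_{i=1}^T w_i|p_i|^2\big)^{1/2}$, and the inner-level gradient system for the smallest singular value of $H_m(p+\epsilon\delta)$ has to be rederived for complex Hankel matrices. I would handle this either through Wirtinger derivatives or by passing to the $2T$-dimensional realification $p \mapsto (\operatorname{Re}(p),\operatorname{Im}(p))$, under which the complex Hankel structure becomes an ordinary real linear structure and the argument of Section~\ref{sec:alg} applies with only notational changes. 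The main obstacle I anticipate is checking that the monotonicity of the smallest singular value along the inner ODE survives this complex-to-real reduction---that the realified constraint set is still the correct one---whereas the elementary computations that establish the remark itself are routine.
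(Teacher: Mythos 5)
Your verification is correct and follows the same route the paper implicitly takes: the remark is an immediate observation from Section~\ref{sec:polfrommoments}, since the vertices $z_j$ are complex and the moments $\tau_k$ in \eqref{eq:complexmoments} inherit this, so the data of the Hankel matrix $H_{N-n}(\tau)$ are complex-valued. Your closing discussion of how the algorithm must be adapted goes beyond the statement itself, but it matches the paper's own later remark that all arguments of Section~\ref{sec:alg} carry over to the complex case with only notational changes.
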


\section{The algorithm}
\label{sec:alg}
In this section, we propose  an algorithm for the numerical solution of scalar Hankel structured low-rank approximation problem \eqref{prob:problem1}. 
 
 The optimization problem we aim at solving is written as follows.

 \begin{problem}
 \label{pb:Hankelmin}
 Given a vector $p \in \R^T$, the  Hankel matrix $H_m(p) \in \R^{m \times n}, m \leq n$ and the norm $\| \cdot \|_w$, compute
 \begin{equation}
 \label{eq:probFnorm}
 \min_{\tilde{p} \in \R^T}  \| p - \tilde{p} \|_w  \ \ \ \text{subject to} \ \ \  rank(H_m(\pt)) < m. 
 \end{equation}
 \end{problem}
The objective function we minimize is the Frobenius norm of the difference of the initial Hankel matrix $H_m(p)$ and the approximating rank deficient Hankel matrix $H_m(\pt)$.  
 We illustrate later how to generalize the proposed approach to different weighted norms. 

 
 \subsection{A double iteration algorithm}
 
Let $\sigma_{\min}(A)$ be the smallest singular value of the matrix $A$.
 It is well known that the rank of a matrix can be computed looking at its singular values. 
 For a matrix $H$, the value $\sigma_{\min}(H)$ measures its (unstructured) distance to singularity. Our aim here is to iteratively decrease the value of $\sigma_{\min}(H)$ but preserving the Hankel structure. 
This is done by modifying (in a structured way) the starting Hankel matrix $H$  in a way that makes the functional $\sigma_{\min}(H)$ decreasing till it reaches a fixed (small) tolerance.  In such a way we are able to achieve (to the given tolerance) the rank constraint and preserve the structure at the same time.
  
 In particular, starting from a (full rank) Hankel matrix $H_m(p)$, the perturbed (Hankel) matrix has the form $H_m(p + \epsilon \delta)$, with $\delta$ a  norm $1$ vector   and  $\epsilon \in \R$ a number which measure the norm of the perturbation on the starting vector $p$; the values of the two parameters $\delta$ and $\epsilon$ are updated independently on two different levels: 
 \begin{itemize}
 \item for a fixed value of $\epsilon$ we compute an optimal perturbation $\delta$ (by looking at the stationary points of a suitable gradient system of ordinary differential equations);
 \item once we have computed the perturbation $\delta$, which we maintain fixed, we update the value of $\epsilon$ in order to get closer to an admissible  solution (a vector associated with a rank deficient Hankel matrix).
 \end{itemize}
 
 \subsection{How to decrease the smallest singular value}
 In this section we consider $\epsilon$ as fixed and we want to compute a perturbation vector   $\delta$ (of  norm 1)  in such a way that $\sigma_{\min}(H_m(p + \epsilon \delta))$ is minimum among all the possible norm-$1$ perturbations. 
 In other words we are looking for the norm $1$ vector $\delta$ which makes $\sigma_{\min}(H_m(p + \epsilon \delta))$ decreasing along the steepest descent direction. 
We make use of the following standard result about perturbation of eigenvalues for positive semidefinite matrices \cite{Kato}, which is adapted to the case of singular values
  \begin{lemma}
\label{lemma:deriveig}
	Let $D(t)$ be a differentiable matrix-valued function for $t$ in a neighborhood of $t_0=0$.
Let $D(t) = U(t) \Sigma(t) V(t)^{\top}$ be a smooth (with respect to $t$) singular value decomposition of the matrix $D(t)$ and
$\sigma(t)$ be a certain singular value of $D(t)$ converging to a simple singular value  $\sigma_0$ of $D(0)$. 
If $u_0, v_0$ are the associated left and right singular vectors, respectively,
the function $\sigma(t)$  is differentiable near $t=0$ with 
\begin{equation*}
\dot{\sigma} = u_0^{\top} \dot{D} v_0.
\end{equation*}
\end{lemma}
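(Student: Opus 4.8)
The plan is to adapt the quoted eigenvalue-perturbation result by differentiating the defining equations of the chosen singular triple. By hypothesis $D(t)=U(t)\Sigma(t)V(t)^\top$ is a smooth SVD, so the column $u(t)$ of $U(t)$, the column $v(t)$ of $V(t)$ and the diagonal entry $\sigma(t)$ of $\Sigma(t)$ attached to our branch are all smooth near $t=0$, with $u(0)=u_0$, $v(0)=v_0$, $\sigma(0)=\sigma_0$. Reading off the relevant columns of $DV=U\Sigma$ and $D^\top U=V\Sigma^\top$ gives, for $t$ near $0$,
\begin{equation*}
D(t)\,v(t)=\sigma(t)\,u(t),\qquad D(t)^\top u(t)=\sigma(t)\,v(t),\qquad u(t)^\top u(t)=v(t)^\top v(t)=1 .
\end{equation*}
The role of the simplicity assumption on $\sigma_0$ is to ensure that $\sigma(t)$ does not coalesce with another singular value for small $t$, so that this branch is unambiguously the one converging to $\sigma_0$ (and the smooth ordering provided by the smooth SVD indeed isolates it).

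Next I would differentiate the first identity at $t=0$,
\begin{equation*}
\dot D\,v_0+D(0)\,\dot v(0)=\dot\sigma\,u_0+\sigma_0\,\dot u(0),
\end{equation*}
and left-multiply by $u_0^\top$. Using the second identity at $t=0$ in the form $u_0^\top D(0)=\bigl(D(0)^\top u_0\bigr)^\top=\sigma_0\,v_0^\top$, together with $u_0^\top u_0=1$, this becomes
\begin{equation*}
u_0^\top\dot D\,v_0+\sigma_0\,v_0^\top\dot v(0)=\dot\sigma+\sigma_0\,u_0^\top\dot u(0).
\end{equation*}
Finally, differentiating the normalization constraints $u(t)^\top u(t)\equiv1$ and $v(t)^\top v(t)\equiv1$ at $t=0$ yields $u_0^\top\dot u(0)=0$ and $v_0^\top\dot v(0)=0$, so the two $\sigma_0$-terms drop out and we are left with $\dot\sigma=u_0^\top\dot D\,v_0$, as claimed. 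Note that this computation never divides by $\sigma_0$, hence it remains valid even when $\sigma_0=0$.

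An equivalent and perhaps more transparent route, closer to the wording preceding the statement, is to apply the standard symmetric/PSD eigenvalue result \cite{Kato} to the symmetric dilation $\left(\begin{smallmatrix}0&D(t)\\ D(t)^\top&0\end{smallmatrix}\right)$, whose eigenvalue $\sigma(t)$ carries the unit eigenvector $\tfrac{1}{\sqrt2}(u_0^\top,v_0^\top)^\top$ at $t=0$; the eigenvalue-derivative formula then gives $\dot\sigma=\tfrac12\bigl(u_0^\top\dot D\,v_0+v_0^\top\dot D^\top u_0\bigr)=u_0^\top\dot D\,v_0$. This requires $\sigma_0$ to be simple \emph{as an eigenvalue of the dilation}, which is automatic when $\sigma_0\neq0$ but may fail at $\sigma_0=0$; for that reason I would keep the direct computation above as the primary argument. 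The only real obstacle is bookkeeping — selecting consistently the smooth branch of singular vectors attached to $\sigma_0$ and keeping the transposes straight. There is no analytic subtlety, since the existence of the smooth SVD (the genuinely delicate point in statements of this type) is granted in the hypotheses.
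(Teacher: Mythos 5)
Your proof is correct, but note that the paper itself does not prove this lemma at all: it simply invokes it as a standard perturbation result for eigenvalues of positive semidefinite matrices \cite{Kato}, ``adapted to the case of singular values'' --- i.e.\ essentially the symmetric-dilation/eigenvalue route you sketch as your second option. Your primary argument is a genuinely self-contained alternative: differentiating the exact relations $D(t)v(t)=\sigma(t)u(t)$, $D(t)^{\top}u(t)=\sigma(t)v(t)$ and the normalizations $u^{\top}u=v^{\top}v=1$ (all of which are legitimate consequences of the smooth SVD granted in the hypotheses), then projecting onto $u_0$ so that the terms $\sigma_0\,v_0^{\top}\dot v(0)$ and $\sigma_0\,u_0^{\top}\dot u(0)$ vanish, is a clean and complete derivation of $\dot\sigma=u_0^{\top}\dot D\,v_0$. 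What your route buys, and what the bare citation to the eigenvalue result does not immediately give, is validity at $\sigma_0=0$ without further fuss: as you correctly point out, a simple zero singular value need not be a simple eigenvalue of the dilation $\left(\begin{smallmatrix}0&D\\ D^{\top}&0\end{smallmatrix}\right)$ (both because of the $\pm\sigma$ pairing and, for rectangular $D$, the extra $n-m$ zero eigenvalues), which is exactly the regime the algorithm is driving $\sigma_{\min}$ towards. The only caveat worth stating explicitly is that your whole argument leans on the smoothness of the chosen singular triple, which here is an assumption of the lemma (the smooth SVD and the simplicity of $\sigma_0$), so nothing is missing; you have just replaced an external reference by a short direct computation.
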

Lemma \ref{lemma:deriveig} is  applied to the singular values of the matrix $H_m(p + \epsilon \delta)$. 
Remembering that both $p$ and $\epsilon$ are constant, the derivative of $\sigma_{\min}(H_m(\pt))$ is given by
\begin{equation}
\label{eq:derivsigma}
\dot{\sigma} =  u^{\top} H_m(\epsilon \dot{\delta}) v,
\end{equation}
where 
$u, v$ are the left and right singular vectors associated with $\sigma_{\min}(H_m(\pt))$.

For a pair of real matrices $A, B$ we let
\[
\langle A, B \rangle = {\rm trace} (A^\top B) = \sum_{i,j=1}^n  A_{ij} B_{ij}
\]
the Frobenius inner product, inducing the norm $\| A \| = \langle A, A \rangle^{1/2}$. If instead $A$ and $B$ are vectors, $\langle A, B \rangle$
denotes the standard inner product.
 
The optimal descent direction for the singular value of interest is obtained by minimizing the following function 
\begin{equation}
\label{eq:functiontomin}
 u^{\top} H_m(\dot{\delta}) v = \langle u v^{\top},  H_m(\dot{\delta}) \rangle = \langle P_{\mathcal{H}} (u v^{\top} ), H_m(\dot{\delta}) \rangle
\end{equation}
where $P_{\mathcal{H}}(B)$ denotes the orthogonal projection of the matrix $B$ onto the subspace of Hankel matrices $\mathcal{H} = \mathcal{H}^{m \times n}$. 

Since we may work directly on the vectors associated to the Hankel matrices, the objective function in \eqref{eq:functiontomin}  
can be written as 
\begin{equation}
\label{eq:vectorialfuntomin}
\langle \ve (P_{\mathcal{H}} (u v^{\top} )), \dot{\delta} \rangle
\end{equation} 
The explicit formula for the operator $P_{\mathcal{H}}(\cdot)$ is given in the following Lemma \ref{lemma:formulaproj}
\begin{lemma}
\label{lemma:formulaproj}
Let $\mathcal{H}^{m \times n}$ be the linear manifold of real $m \times n$ Hankel matrices, and let $B \in  \R^{m \times n}$ an arbitrary matrix. 
The orthogonal projection (with respect to the Frobenius inner product) of $B$ onto $\mathcal{H}$ is given by 
\[
P_{\mathcal{H}}(B) = H_m(q)
\]
where
$$
\begin{aligned}
q_i &=  \left\{ \begin{array}{l} \displaystyle{\frac{1}{i} \sum_{j=1}^i B_{j,i-j+1}   \ \ \ \text{for} \ \  i=1, \dots, m} \\[4mm]
                                 \displaystyle{\frac{1}{m}  \sum_{j=1}^{m}     B_{j, i-j+1}         \ \ \ \text{for} \ \  i=m+1, \dots, n-1}      \end{array} \right.
																\\
q_{m+n-i} &= \frac{1}{i} \sum_{j=1}^{i} B_{j,m+n-i-j+1}   \ \ \ \text{for} \ \  i=1, \dots, m.      
\end{aligned}
$$
\end{lemma}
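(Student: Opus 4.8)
The plan is to use the fact that a matrix belongs to $\mathcal{H}^{m\times n}$ exactly when it is constant along each anti-diagonal, together with the observation that the squared Frobenius distance splits as a sum over anti-diagonals; this decouples the projection into independent one-dimensional least-squares problems, each solved by a plain average.

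First I would index the anti-diagonals of an $m\times n$ matrix by $k=1,\dots,T$ with $T=m+n-1$, setting
\[
D_k=\bigl\{(j,k-j+1)\ :\ \max(1,k-n+1)\le j\le\min(k,m)\bigr\}.
\]
The sets $D_1,\dots,D_T$ partition the index set $\{1,\dots,m\}\times\{1,\dots,n\}$, and $C\in\mathcal{H}^{m\times n}$ if and only if $C$ is constant on each $D_k$; calling $q_k$ that common value identifies $\mathcal{H}^{m\times n}$ with $\{H_m(q):q\in\R^T\}$. Hence, for any $B\in\R^{m\times n}$ and any $q\in\R^T$,
\[
\|B-H_m(q)\|^2=\sum_{k=1}^{T}\ \sum_{(j,l)\in D_k}\bigl(B_{j,l}-q_k\bigr)^2 ,
\]
so minimizing over $q$ amounts to minimizing each inner sum separately over the scalar $q_k$.

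Second, for a finite list of reals the unique minimizer of $c\mapsto\sum_\alpha(b_\alpha-c)^2$ is the arithmetic mean of the $b_\alpha$ (set the derivative to zero, or project onto the span of the all-ones vector). Since $\mathcal{H}^{m\times n}$ is a finite-dimensional, hence closed, subspace, $P_{\mathcal{H}}(B)$ is the unique minimizer and equals $H_m(q^\star)$ with
\[
q^\star_k=\frac{1}{|D_k|}\sum_{(j,l)\in D_k}B_{j,l}=\frac{1}{|D_k|}\sum_{j}B_{j,k-j+1}.
\]
It then remains to rewrite this single formula as the three displayed cases by evaluating $|D_k|$ and the $j$-range using $m\le n$: for $k=1,\dots,m$ one gets $1\le j\le k$ and $|D_k|=k$ (first line, with $i=k$); for $k=m+1,\dots,n-1$ one gets $1\le j\le m$ and $|D_k|=m$ (second line); and for the last $m$ anti-diagonals, written as $k=m+n-i$ with $i=1,\dots,m$ (which includes $k=n$ at $i=m$, still of length $m$), one gets $m-i+1\le j\le m$ and $|D_k|=i$, and shifting the summation index reproduces the third line.

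The only step requiring real care — essentially the only content beyond the decoupling argument — is this final bookkeeping: getting the endpoints of the $j$-ranges right in the three regimes and checking the boundary indices $k=m$, $k=n$ where neighbouring regimes meet, as well as the degenerate cases $n=m$ and $n=m+1$ in which the middle block is empty or the outer blocks abut, so that the stated piecewise formula comes out with no gaps and no double counting. Everything else is routine.
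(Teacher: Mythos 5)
Your approach is the same as the paper's: the paper's entire proof is the one-line observation that the Frobenius-orthogonal projection onto $\mathcal{H}^{m\times n}$ is obtained by averaging along the anti-diagonals, and your decomposition of $\|B-H_m(q)\|^2$ over the anti-diagonal partition $D_1,\dots,D_T$, followed by the scalar least-squares step, is exactly the justification the paper omits, and it is carried out correctly; the first two cases of the piecewise formula and the boundary checks ($k=n$ at $i=m$, the empty middle block when $n\le m+1$) also come out right.

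The one point that does not check out is the final identification, which you yourself single out as the only delicate step. Your (correct) general formula $q_k^\star=\frac{1}{|D_k|}\sum_{(j,l)\in D_k}B_{j,l}$, specialized to $k=m+n-i$ with $j$ running over $m-i+1\le j\le m$, becomes after the shift $j\mapsto j+m-i$ the expression $\frac{1}{i}\sum_{j=1}^{i}B_{m-i+j,\,n-j+1}$, \emph{not} the displayed $\frac{1}{i}\sum_{j=1}^{i}B_{j,\,m+n-i-j+1}$. The displayed third line, taken literally, refers to entries outside the matrix whenever $i<m$: for $i=1$ it reads $B_{1,m+n-1}$, whose column index exceeds $n$ when $m>1$, whereas the correct value of $q_{m+n-1}$ is $B_{m,n}$; the row index in the statement has not been shifted along with the column index. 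So your argument proves the correct result (and in effect the corrected form of the third line), but the claim that the index shift ``reproduces the third line'' as printed is false; you should either state the corrected formula $q_{m+n-i}=\frac{1}{i}\sum_{j=1}^{i}B_{m-i+j,\,n-j+1}$ (equivalently $\frac{1}{i}\sum_{j=m-i+1}^{m}B_{j,\,m+n-i-j+1}$) or explicitly flag the misprint rather than assert agreement.
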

\begin{proof}
The solution is given by the orthogonal projection with respect to the Frobenius inner product which is simply obtained by taking the averages along the anti-diagonals
of the matrix $B$. 
\end{proof}
A further important result follows.
\begin{lemma}
\label{lemma:projnonzero}
Let $H(p)$ be a Hankel matrix.  If $\sigma > 0$ is a simple singular value of $H(p)$ and $u$ and $v$ are the corresponding left and right singular vectors, then
$$
P_{\mathcal{H}}(uv^{\top}) \neq 0.
$$
\end{lemma}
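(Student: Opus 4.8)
The plan is to argue by contradiction, exploiting two facts that are already at hand: $P_{\mathcal{H}}$ is the \emph{orthogonal} projection onto $\mathcal{H}$, and the subspace $\mathcal{H}$ contains the matrix $H(p)$ itself. So I would assume $P_{\mathcal{H}}(uv^{\top}) = 0$ and show this forces $\sigma = 0$.

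First, because $H(p) \in \mathcal{H}$ and the residual $uv^{\top} - P_{\mathcal{H}}(uv^{\top})$ is Frobenius-orthogonal to $\mathcal{H}$ — this is precisely the identity already used in \eqref{eq:functiontomin} — one has
\begin{equation*}
\langle uv^{\top}, H(p) \rangle = \langle P_{\mathcal{H}}(uv^{\top}), H(p) \rangle .
\end{equation*}
Second, expanding the left-hand side via the cyclic invariance of the trace and using the singular-vector relation $H(p)\, v = \sigma\, u$ together with $\| u \| = 1$,
\begin{equation*}
\langle uv^{\top}, H(p) \rangle = {\rm trace}\big( v u^{\top} H(p) \big) = u^{\top} H(p)\, v = \sigma\, u^{\top} u = \sigma .
\end{equation*}
Under the assumption $P_{\mathcal{H}}(uv^{\top}) = 0$ the two displays give $\sigma = 0$, contradicting $\sigma > 0$; hence $P_{\mathcal{H}}(uv^{\top}) \neq 0$.

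I do not expect a genuine obstacle here: the only subtlety is to use the orthogonality of the projection (so that the Frobenius pairing with $H(p)$ is unchanged) rather than wrestling with the explicit averaging formula of Lemma~\ref{lemma:formulaproj}. I would close with a remark that the argument actually uses only $\sigma > 0$ and $\| u \| = \| v \| = 1$, simplicity of $\sigma$ being needed only to pin down $u$ and $v$; and that a purely combinatorial proof is also available through Lemma~\ref{lemma:formulaproj}, noting that $P_{\mathcal{H}}(uv^{\top}) = 0$ amounts to the vanishing of every anti-diagonal sum of $uv^{\top}$, i.e. of every coefficient of the polynomial $\big( \sum_{i} u_i z^{i-1} \big)\big( \sum_{j} v_j z^{j-1} \big)$, which cannot happen for nonzero $u$ and $v$ since $\R[z]$ has no zero divisors.
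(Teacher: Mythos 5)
Your proposal is correct and takes essentially the same route as the paper's proof: argue by contradiction, use the Frobenius-orthogonality of $P_{\mathcal{H}}$ together with $H(p)\in\mathcal{H}$ to reduce $\langle uv^{\top},H(p)\rangle$ to $\langle P_{\mathcal{H}}(uv^{\top}),H(p)\rangle$, and identify the former with $u^{\top}H(p)v=\sigma>0$. The closing remark about the alternative combinatorial argument via anti-diagonal sums and the absence of zero divisors in $\R[z]$ is a nice extra, but the core argument coincides with the paper's.
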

\begin{proof}
Assume, by contradiction, $P_{\mathcal{H}}(uv^{\top}) = 0$. 
We have 
$$
0 = \langle P_{\mathcal{H}}(uv^{\top}) , H(p) \rangle = \langle u v^{\top}, H(p) \rangle = u^{\top} H(p) v = \sigma.
$$
The proof is completed since $\sigma > 0$ by assumption, so by following the chain we get the contradiction $0 > 0$.
\end{proof}

Consider the singular value $\sigma_{\min}(H_m(\pt))$, and let  $u, v$ be the corresponding left and right singular vectors, respectively. 
The steepest descent direction for the considered singular value is given by the solution of the following optimization problem:
\begin{equation}
\label{eq:optprob}
\dot{\delta}^* = \argmin_{\substack{\dot{\delta} \in \R^T \\ \| \dot{\delta} \|_2 = 1}} u^{\top} H_m(\dot{\delta}) v  \ \ \  \ \ 
\text{subject to}  \  \langle \delta, \dot{\delta} \rangle = 0
\end{equation}
where the constraint on the norm guarantees the uniqueness of the solution (which represents a direction) while the last constraint guarantees 
the norm conservation of $\delta$, which we have assumed. We give the solution of the problem \eqref{eq:optprob} in the following Lemma
\begin{lemma}
\label{lemma:ode}
Let   $u, v$ be the left and right singular vectors of   $H_m(\pt) = H_m(p + \epsilon \delta)$ with $\delta \in \R^T$ of unit norm.  The solution of the optimization problem \eqref{eq:optprob} is given by 
 \begin{equation}
 \label{eq:opt}
 \mu \dot{\delta}^* = \ve(-P_{\mathcal{H}} (uv^{\top})) + \langle \delta, \ve(P_{\mathcal{H}} (uv^{\top})) \rangle \delta
 \end{equation}
  where $\mu$ is the norm of the vector in the right hand side.
\end{lemma}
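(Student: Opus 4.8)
The plan is to read \eqref{eq:optprob} as the minimization of a \emph{linear} functional of $\dot\delta$ over the unit sphere of $\R^T$ intersected with the hyperplane $\{\dot\delta : \langle\delta,\dot\delta\rangle = 0\}$ through the origin, a problem that admits a closed-form solution via orthogonal projection onto $\delta^{\perp}$ and the Cauchy--Schwarz inequality.

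First I would rewrite the objective in vector form. By \eqref{eq:functiontomin} and \eqref{eq:vectorialfuntomin}, for every $\dot\delta \in \R^T$ one has $u^\top H_m(\dot\delta)\, v = \langle g, \dot\delta \rangle$, where $g := \ve(P_{\mathcal{H}}(uv^\top))$, so \eqref{eq:optprob} is equivalent to minimizing $\langle g, \dot\delta \rangle$ subject to $\|\dot\delta\|_2 = 1$ and $\langle\delta,\dot\delta\rangle = 0$. Since $\|\delta\|_2 = 1$ by hypothesis, I decompose $g$ into the part along $\delta$ and the part orthogonal to it,
\[
g = \langle\delta,g\rangle\,\delta + g_\perp, \qquad g_\perp := g - \langle\delta,g\rangle\,\delta, \qquad \langle\delta,g_\perp\rangle = 0 .
\]
Every feasible $\dot\delta$ satisfies $\langle\delta,\dot\delta\rangle = 0$, hence $\langle g,\dot\delta\rangle = \langle g_\perp,\dot\delta\rangle$, and the problem collapses to minimizing $\langle g_\perp,\dot\delta\rangle$ over unit vectors $\dot\delta$ lying in the subspace $\delta^{\perp}$.

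Next I would invoke the Cauchy--Schwarz inequality inside $\delta^{\perp}$: for any such $\dot\delta$, $\langle g_\perp,\dot\delta\rangle \ge -\|g_\perp\|_2\,\|\dot\delta\|_2 = -\|g_\perp\|_2$, with equality precisely for the unit vector $\dot\delta = -g_\perp/\|g_\perp\|_2$, which lies in $\delta^{\perp}$ and is therefore feasible. Hence the (unique) minimizer satisfies
\[
\|g_\perp\|_2\,\dot\delta^* = -g_\perp = -g + \langle\delta,g\rangle\,\delta = \ve(-P_{\mathcal{H}}(uv^\top)) + \langle \delta, \ve(P_{\mathcal{H}}(uv^\top)) \rangle\,\delta ,
\]
which is exactly \eqref{eq:opt} with $\mu = \|g_\perp\|_2$ the norm of the right-hand side. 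The same identity follows, if one prefers, from the Lagrange stationarity condition $g = 2\lambda_1\dot\delta^* + \lambda_2\delta$: imposing $\langle\delta,\dot\delta^*\rangle = 0$ forces $\lambda_2 = \langle\delta,g\rangle$, while $\|\dot\delta^*\|_2 = 1$ and minimality of the critical point fix $2\lambda_1 = -\|g_\perp\|_2$.

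The only delicate point is well-posedness, i.e.\ $g_\perp \neq 0$, so that $\dot\delta^*$ is genuinely determined. Lemma \ref{lemma:projnonzero} already excludes $g = 0$ when $\sigma_{\min}$ is a simple, positive singular value; the remaining degenerate case $g_\perp = 0$ (that is, $g$ parallel to $\delta$) is precisely an equilibrium of the gradient system to be constructed in the next subsection, where no admissible descent direction exists and \eqref{eq:opt} holds trivially as $0 = 0$. I expect this case distinction to be essentially the only subtlety; everything else is the routine closed-form minimization of a linear functional on a sphere subject to one linear equality constraint.
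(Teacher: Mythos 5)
Your proposal is correct and follows essentially the same route as the paper's own proof: the paper identifies $-\ve(P_{\mathcal{H}}(uv^{\top}))$ as the unconstrained descent direction, projects it onto the orthogonal complement of $\delta$ via $y \mapsto y - \langle \delta, y\rangle\delta$, and normalizes, which is exactly your decomposition $g = \langle\delta,g\rangle\delta + g_\perp$ with minimizer $-g_\perp/\|g_\perp\|_2$. Your Cauchy--Schwarz argument (and the remark on the degenerate case $g_\perp = 0$, which the paper handles only implicitly through Lemma \ref{lemma:projnonzero} and the stationarity discussion) simply makes rigorous what the paper states as an immediate projection-and-normalize step.
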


\begin{proof}
In the Frobenius metric,  the minimizing direction for  function   \eqref{eq:functiontomin} is  reached  for    $H_m(\dot{\delta}) = -P_{\mathcal{H}} (uv^{\top})$, consequently  $\ve(-P_{\mathcal{H}} (uv^{\top}))$ is the solution of the unconstrained version of \eqref{eq:optprob}.

For an arbitrary vector $y$ (of suitable dimension), the projection onto the space orthogonal to $\delta$ is given by 
$$
P(y) = y - \langle \delta, y \rangle \delta.
$$
The claim follows by choosing $y = \ve(-P_{\mathcal{H}} (uv^{\top}))$ and normalizing in order to satisfy the constraint on the norm. 
\end{proof}
Lemma \ref{lemma:ode} is a key result for the proposed method.
 Since \eqref{eq:opt} gives the unit norm steepest descent direction $\delta$
for the smallest singular value, we may omit the scaling factor $\mu$ by normalizing $\dot{\delta}$ (to have norm $1$)  and
consider the gradient system for $\sigma$
\begin{equation} \label{eq:ode}
\begin{aligned}
\dot{\delta} &= -\ve(P_{\mathcal{H}} (uv^{\top})) + \langle \delta, \ve(P_{\mathcal{H}} (uv^{\top})) \rangle \delta \\
 \delta(0) &= \delta_0 \  \text{
with} \ \ \  \| \delta_0 \| = 1
\end{aligned}
\end{equation}
that (locally) minimizes the smallest singular value $\sigma$ of $H_m(p + \epsilon \delta)$ on the set $\{ \delta : \| \delta \| = 1 \}$.

Considering the initial value $\delta(0)$ of unit norm then we can see that
\begin{equation*}
\frac{d}{dt} \| \delta(t) \|^2 =
2\,\langle \dot{\delta}(t), \delta(t) \rangle =
0
\end{equation*}
where we have replaced $\dot{\delta}$ by the right-hand side of the
ODE \eqref{eq:ode}, which implies norm conservation (this relation holds true since we impose $\delta(t)$ to have norm $1$). 

By construction \eqref{eq:ode} is the gradient system for the objective functional $\sigma$ (the smallest singular value of the perturbed Hankel matrix) 
under the constraint that the perturbation has fixed norm $\eps$ (i.e. $\delta$ has unit  norm).

Thus it follows directly that the objective functional  is monotonically decreasing along the solution of \eqref{eq:ode}.

\begin{theorem}
\label{th:monotonicity}
Let $\delta(t) \in \R^T$  be a solution of \eqref{eq:ode}. If $\sigma(t)$ is the smallest singular value of the Hankel matrix $H_m(\pt) = H_m(p + \epsilon \delta)$, then 
$$
\dot{\sigma}(t)  \leq 0.
$$
\end{theorem}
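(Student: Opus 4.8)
The plan is to compute $\dot\sigma(t)$ explicitly along solutions of the ODE \eqref{eq:ode} and show it is a sum of terms of definite sign. First I would invoke Lemma~\ref{lemma:deriveig} (applied as in \eqref{eq:derivsigma}), which gives
\[
\dot\sigma = u^\top H_m(\epsilon\dot\delta)\, v = \epsilon\, \langle u v^\top, H_m(\dot\delta)\rangle = \epsilon\, \langle P_{\mathcal H}(u v^\top),\, H_m(\dot\delta)\rangle,
\]
the last equality because $H_m(\dot\delta)$ already lies in $\mathcal H$, so only the Hankel part of $uv^\top$ contributes. Passing to the associated vectors (as in \eqref{eq:vectorialfuntomin}), this reads $\dot\sigma = \epsilon\, \langle \ve(P_{\mathcal H}(uv^\top)),\, \dot\delta\rangle$.

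Next I would substitute the right-hand side of \eqref{eq:ode} for $\dot\delta$. Writing $z := \ve(P_{\mathcal H}(uv^\top))$ for brevity, the ODE says $\dot\delta = -z + \langle \delta, z\rangle\,\delta$, so
\[
\dot\sigma = \epsilon\,\langle z,\, -z + \langle\delta,z\rangle\,\delta\rangle = \epsilon\Bigl(-\|z\|^2 + \langle\delta,z\rangle^2\Bigr).
\]
By the Cauchy--Schwarz inequality and $\|\delta\|=1$ (norm conservation, which was established just before the theorem), we have $\langle\delta,z\rangle^2 \le \|z\|^2\,\|\delta\|^2 = \|z\|^2$, hence $-\|z\|^2 + \langle\delta,z\rangle^2 \le 0$. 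Assuming $\epsilon \ge 0$ (which is the case throughout, since $\epsilon$ measures the norm of the perturbation), we conclude $\dot\sigma(t)\le 0$.

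One subtlety to address is that Lemma~\ref{lemma:deriveig} requires $\sigma(t)$ to stay a \emph{simple} singular value so that it is differentiable; I would state this as the standing assumption (as is implicit in the derivation of \eqref{eq:derivsigma}), noting that simplicity holds generically along the flow and that at points where $\sigma_{\min}$ collides with the next singular value one works with the appropriate one-sided derivative. A second point worth a remark is that if $\sigma(t)>0$ then, by Lemma~\ref{lemma:projnonzero}, $z\neq 0$, so $\|z\|^2>0$; combined with the strict Cauchy--Schwarz inequality whenever $z$ is not parallel to $\delta$, this shows the decrease is in fact strict away from stationary points of \eqref{eq:ode}, which explains why the flow genuinely drives $\sigma$ down toward $0$. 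I do not expect any real obstacle here: the argument is a direct chain-rule computation plus Cauchy--Schwarz, and all the nontrivial ingredients (the derivative formula, the projection identity, norm conservation) are already in place.
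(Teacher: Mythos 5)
Your proposal is correct and follows essentially the same route as the paper's proof: express $\dot\sigma$ via Lemma~\ref{lemma:deriveig} and \eqref{eq:vectorialfuntomin}, substitute the right-hand side of \eqref{eq:ode}, and conclude with Cauchy--Schwarz and $\|\delta\|=1$. Your added remarks (keeping the factor $\epsilon\ge 0$ explicit, the simplicity caveat for differentiability, and strict decrease away from stationary points via Lemma~\ref{lemma:projnonzero}) are sound refinements of the same argument, not a different proof.
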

\begin{proof}
To prove the result we recall that  $\dot{\sigma} = u^{\top} H_m(\dot{\delta}) v$ (omitting the constant term) and we compute its point of minimum 
by using the expression in \eqref{eq:vectorialfuntomin}. We replace now the expression of $\dot{\delta}$  using equation~\eqref{eq:ode}, and observe that
\begin{eqnarray*}
&& \langle \ve (P_{\mathcal{H}} (u v^{\top})), \ve (P_{\mathcal{H}} (u v^{\top})) \rangle =  \| \ve ( P_{\mathcal{H}} (u v^{\top})) \|^2.
\\
&& \langle \ve (P_{\mathcal{H}} (u v^{\top})),    \delta \rangle  \langle  \ve (P_{\mathcal{H}} (u v^{\top})), \delta \rangle  = \langle \ve (P_{\mathcal{H}} (u v^{\top})),    \delta \rangle^2.
\end{eqnarray*}
By adding the two terms with the correct signs, we get
\[
\langle  \ve (P_{\mathcal{H}} (u v^{\top})),    \dot{\delta} \rangle = -  \| \ve ( P_{\mathcal{H}} (u v^{\top})) \|^2 + \langle \ve (P_{\mathcal{H}} (u v^{\top})),    \delta \rangle^2 \leq 0
\]
using Cauchy-Schwarz inequality and recalling that  $\delta$ has norm $1$. 
\end{proof}

	\begin{obs}
		Tha assumption on $\delta$ to have norm $1$ plays a key role. However we remind that the whole perturbation is given by $\epsilon \delta$ whose norm is $\epsilon$ (fixed at the inner level). The scaling factor for the direction vector $\delta$ is arbitrary, so we  choosed it to be norm $1$ without loss of generality. 
		\end{obs}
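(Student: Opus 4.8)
The plan is to justify the normalization by exploiting the fact that the perturbed matrix $H_m(p + \epsilon \delta)$, and hence its smallest singular value $\sigma_{\min}$, depends on the pair $(\epsilon, \delta)$ only through the single product $\epsilon\delta \in \R^T$. First I would record the scaling invariance: for any $\alpha > 0$ the two pairs $(\epsilon, \delta)$ and $(\epsilon/\alpha,\, \alpha\delta)$ produce exactly the same perturbation $\epsilon\delta = (\epsilon/\alpha)(\alpha\delta)$, hence the same matrix and the same objective value. This shows that the decomposition of a perturbation into a magnitude $\epsilon$ and a direction $\delta$ is not unique, and that the length of $\delta$ is a free gauge parameter. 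In particular, once $\|\delta\|_2 = 1$ is imposed, the norm of the whole perturbation is genuinely $\|\epsilon\delta\|_2 = \epsilon$, which is the second assertion of the remark.

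Next I would turn this invariance into the \emph{without loss of generality} statement. Consider the map $(\epsilon, \delta) \mapsto \epsilon\delta$ restricted to the set $\{\epsilon > 0,\ \|\delta\|_2 = 1\}$. Every nonzero perturbation $\eta \in \R^T$ can be written as $\eta = \epsilon\delta$ with $\epsilon = \|\eta\|_2$ and $\delta = \eta/\|\eta\|_2$, so this map is onto $\R^T \setminus \{0\}$ (and injective up to the sign of $\delta$). Consequently the constrained parametrization — a unit-norm direction $\delta$ together with a scalar $\epsilon$ — reaches precisely the same family of perturbed matrices $H_m(p + \eta)$ as an unconstrained search over $\eta$. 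Fixing $\|\delta\|_2 = 1$ therefore restricts only the bookkeeping of the two-level iteration, not the set of admissible approximants, which is exactly the meaning of the claim.

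Finally I would explain why the normalization nonetheless plays a key role. The unit-norm constraint is what allows the inner level to be posed as a flow on the sphere $\{\delta : \|\delta\|_2 = 1\}$: the orthogonality condition $\langle \delta, \dot{\delta} \rangle = 0$ appended to \eqref{eq:optprob} and built into the gradient system \eqref{eq:ode} enforces $\frac{d}{dt}\|\delta(t)\|^2 = 0$, keeping $\epsilon$ as the true size of the perturbation throughout the integration; this same property $\|\delta\| = 1$ is the ingredient invoked in the Cauchy--Schwarz step of Theorem \ref{th:monotonicity}. The only (minor) obstacle is to verify the two roles are mutually consistent: the gauge fixing must leave the reachable set intact (the surjectivity above) while being indispensable for the gradient flow to stay on the sphere. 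Once both are checked the remark follows.
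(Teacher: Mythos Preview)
Your justification is correct and in fact more detailed than anything the paper provides: the statement in question is a \emph{remark} (an \texttt{obs} environment), and the paper offers no proof for it beyond the two sentences of the remark itself. Your argument---the scaling invariance $(\epsilon,\delta)\sim(\epsilon/\alpha,\alpha\delta)$, the surjectivity of the polar-type map onto $\R^T\setminus\{0\}$, and the observation that the unit-norm constraint is what makes the inner flow live on the sphere and enables the Cauchy--Schwarz step in Theorem~\ref{th:monotonicity}---is exactly the reasoning the remark is gesturing at, made explicit.
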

	
	 The previous results allow to state that the points of (local) minimum for the objective functional ($\delta \rightarrow \sigma_{\min}(H_m(p + \epsilon \delta))$)  are the stationary points of the gradient system \eqref{eq:ode} ($\delta$ such that $\dot{\delta} = 0$). 
Moreover Lemma \ref{lemma:projnonzero} guarantees that such points of minimum correspond only to the zeros of the derivative and vice versa. Consequently, the goal of the inner iteration is to integrate equation~\eqref{eq:ode} until stationary points.

Equation \eqref{eq:ode} is a differential equation for the vector $\delta$. In order to compute its stationary points we choose to adopt an Explicit Euler scheme, because the function evaluation (a singular value decomposition) at each step is quite expensive.  Because of the significant computational cost we  preferred to avoid both higher order explicit schemes and implicit method (the first require more than one svd factorization at each step, the latter require an extra  solution of a system).
The numerical scheme for the integration of the ODE \eqref{eq:ode} is summarized in Algorithm \ref{alg:controlODE}.

\begin{algorithm2e}[H]
\caption{Numerical solution of the ODE \eqref{eq:ode}}
\label{alg:controlODE}
\DontPrintSemicolon
\KwData{$\delta$, $\sigma$, $u$, $v$, $h$ (step Euler method) and $\gamma$ (step size reduction),~$\epsilon$.}
\KwResult{$\delta^*$, $\sigma^*$}

\Begin{
\nl Compute $\dot{\delta}= \ve(- P_{\mathcal{H}}(uv^{\top})) + \langle \delta, \ve(P_{\mathcal{H}}(uv^{\top})) \rangle \delta $ \;
\nl Euler step $\rightarrow$ $ \delta^{1} = \delta + h \dot{\delta}$ \;
\nl Normalize $\delta^{1}$ dividing it by its  norm \;
\nl Compute the  singular value $\sigma^1=\sigma_{\min}(H_m(p + \epsilon \delta^1))$ \;
\nl Compute the  singular vectors $u^1$ and $v^1$  associated with $\sigma^1$ \;
\nl \eIf{ $\sigma^1 > \sigma$}{
      reject the result and reduce the step $h$ by a factor $\gamma$ \;
      repeat from line 2} 
      {accept the result; set  $\sigma^* = \sigma^1$, $\delta^*=\delta^1$\;
 \nl \If{$\sigma^* - \sigma < tol$ \textbf{or} $\sigma^* \leq tol$}
 {\textbf{return}} \;
    } 
 }
\end{algorithm2e}
The singular triplet $u, v, \sigma$ at each step is computed through the Matlab function \textit{svds} in order to store only the needed vectors and not all the factorization of the Hankel matrix. However the numerical results got from \textit{svds} and \textit{svd} are not exactly the same in floating point arithmetic, so the use of \textit{svds} can lead to a less accurate solution because of algorithmic errors but  the code is expected to run faster.

\subsection{How to compute $\epsilon$?  }

The goal is that of modifying $\epsilon$ so that the branch of smallest singular values $\sigma(\epsilon)$ of $H_m(p +   \epsilon \delta(\epsilon))$ 
(where $\delta(\epsilon)$ stands for the minimizer at a given $\epsilon$) smoothly reaches the minimum value zero.

\subsubsection*{Free and constrained dynamics}

Once we get the optimal perturbation $\delta(\epsilon)$ (for a given value of $\epsilon$) we need to iteratively update the value of $\epsilon$ till we reach the sought solution (a vector $\pt = p + \epsilon^* \delta(\epsilon^*)$ associated with a rank deficient Hankel matrix $H(\pt)$). A possible way to proceed would be to increase the value of $\epsilon$ by a constant increment at each step; however,  if we proceed in this way on some randomly chosen test problem, we may observe  something unexpected, 
that is an apparent loss of monotonicity of the objective functional   $\epsilon \rightarrow \sigma_{\min}(H_m(p+\epsilon\delta))$. 

The  observed behavior may be explained in terms of the choice of the initial datum for the ODE \eqref{eq:ode}. 
Consider the value $\epsilon_1 \in \R$, and integrate the equation~\eqref{eq:ode} till converging to an optimal perturbation $\delta_1$ (of  norm $1$) which corresponds to a 
(locally) minimal  singular value $\sigma(\epsilon_1) = \sigma_{\min}(H_m(p +  \epsilon_1 \delta_1(\epsilon_1))$. 
Passing to the next iteration we update $\epsilon_2=\epsilon_1 + \Delta \epsilon$ where $\Delta \epsilon$ is a constant term, and a natural choice for the initial datum for the ODE \eqref{eq:ode} is $p + \epsilon_2 \delta_1(\epsilon_1)$. 
In general we may have $\sigma_{\min}(H_m(p + \epsilon_2 \delta_1(\epsilon_1))) > \sigma(\epsilon_1)$, and this could persist also once we get the optimal perturbation 
$\delta_2(\epsilon_2)$ corresponding to $\epsilon_2$ (i.e.,  it can happen that $\sigma(\epsilon_2) > \sigma(\epsilon_1)$, which means we are not following the smallest 
singular value in a smooth way). 


To deal with this issue we need to introduce an intermediate different ODE, which makes the branch $\sigma(\epsilon)$ continuous. Since this is obtained by
omitting the norm constraint on $\delta$ at the inner level we call it {\em free dynamics}.
\paragraph{Free dynamics for $\sigma$}
The basic idea of this computational strategy is to start each iteration exactly from the endpoint of the previous one: we iteratively alternate two different dynamics in order to preserve the monotonicity of the function $\sigma_{\min}(H_m(p+\epsilon \delta))$ with respect to $\epsilon$ (the monotonicity with respect to $\delta$ is a property of the gradient system).


Previously we considered \eqref{eq:ode}, which is an ODE whose solution is a vector $\delta(t)$ of unit norm, so that the norm of the perturbation on the starting data vector 
$p$ is given by the value of $\epsilon$. What happens if we remove the constraints on the computed solution? The unconstrained optimization problem is still a gradient system for the smallest  singular value $\sigma$, although not preserving $\| \delta(t) \| = 1$, and the modified ODE is given by 
\begin{equation} \label{eq:odefree}
\dot{\delta} = {}-\ve(P_{\mathcal{H}} (uv^{\top})) 
\end{equation}
Assume we have solved \eqref{eq:ode} for $\epsilon = \epsilon_1$ and in the outer iteration a value $\epsilon=\epsilon_2 > \epsilon_1$ is proposed.
The initial datum for $\delta$, that we choose, is given by $\delta(\epsilon_1)$, which has unit norm and has been computed by solving \eqref{eq:ode}. 
At some time $\bar{t}$ the solution of \eqref{eq:odefree}---which is expected to increase in norm in order to decrease the smallest singular value---has norm
\[
\delta(\bar{t}) = \frac{\epsilon_2}{\epsilon_1}.
\]
At this point we consider again \eqref{eq:ode} with $\epsilon=\epsilon_2$ and initial datum $\delta(0) = \delta(\bar{t})/\| \delta(\bar{t}) \|$.  
In this way we have a global continuity with respect to $t$ and $\epsilon$ of $\delta$ and consequently of $\sigma$, and avoid jumps on different
branches which may determine a loss of the monotonicity property that we expect.
 If we assume that 
	$\eps^*$ is the minimum value such that $\sigma$ nihilates, i.e., 
	$\sigma(\eps^*)=0$, we have that $\sigma(\epsilon) > 0$ as far as  $\eps < \eps^*$. 
Then, in order to decrease $\sigma$ towards $0$,
the ODE  \eqref{eq:odefree}
  (which is not norm-preserving)  has necessarily to increase the norm of
 the perturbation  by its gradient system structure.  The monotonicity of this  integration  comes from the choice of the integration scheme (look at Algorithm \ref{alg:controlODE} without the normalization step $3$) since we only  accept the computed solutions corresponding to decreasing values for the function to be minimized $\sigma$.  
We remark that this fact holds true till the norm of the perturbation is less than $\epsilon^*$ in order to avoid jumps on different branches of the same function. 
\begin{obs}
	The coupling of \eqref{eq:ode} and \eqref{eq:odefree} described in this section imply monotonicity 
	of $\sigma(\eps \delta(\eps))$ with respect to $\eps$ when  when
	$\eps < \eps^*$.
   This is because of the gradient system structure
	of both systems of ODEs and the choice of the integration schemes.
	\end{obs}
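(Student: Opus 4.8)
The plan is to follow the value of the smallest singular value $\sigma$ along the whole curve obtained by concatenating the two dynamics, and to show that $\sigma$ is nonincreasing in the integration variable $t$ while the perturbation size $\epsilon$ is nondecreasing in $t$; the monotonicity of $\epsilon\mapsto\sigma(\epsilon\,\delta(\epsilon))$ would then be an immediate consequence. Concretely, I would fix two outer values $\epsilon_1<\epsilon_2\le\epsilon^*$ and split the evolution into three phases: \emph{(a)} integrate the norm-preserving system \eqref{eq:ode} at $\epsilon=\epsilon_1$ from a unit-norm datum up to the local minimizer $\delta(\epsilon_1)$; \emph{(b)} switch to the free system \eqref{eq:odefree}, keeping the nominal size $\epsilon_1$ and starting from $\delta(\epsilon_1)$, up to the first time $\bar t$ at which $\|\delta(\bar t)\|=\epsilon_2/\epsilon_1$; \emph{(c)} renormalize, $\delta(0):=\delta(\bar t)/\|\delta(\bar t)\|$, and resume \eqref{eq:ode} with $\epsilon=\epsilon_2$ up to the new minimizer $\delta(\epsilon_2)$. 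In phase (a), Theorem \ref{th:monotonicity} already gives $\dot\sigma\le0$, and Lemma \ref{lemma:projnonzero} ensures that the stationary points of \eqref{eq:ode} have $\sigma=0$, which is impossible here because $\epsilon_1<\epsilon^*$ forces $\sigma(\epsilon_1)>0$; so the flow does converge to a genuine minimizer. In phase (b), since \eqref{eq:odefree} is the unconstrained gradient system for $\sigma$, a one-line computation gives $\dot\sigma=\epsilon_1\,\langle\ve(P_{\mathcal H}(uv^{\top})),\dot\delta\rangle=-\epsilon_1\,\|\ve(P_{\mathcal H}(uv^{\top}))\|^2\le0$, and again by Lemma \ref{lemma:projnonzero} the flow cannot stall while $\sigma>0$.

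The two dynamics are glued together by the continuity of $\sigma$ at the switching instants. At the transition (a)$\to$(b) the matrix $H_m(p+\epsilon_1\delta)$ is left literally unchanged, so $\sigma$ is continuous there; at the transition (b)$\to$(c) the stopping rule $\|\delta(\bar t)\|=\epsilon_2/\epsilon_1$ yields
\[
p+\epsilon_1\,\delta(\bar t)\;=\;p+\epsilon_2\,\frac{\delta(\bar t)}{\|\delta(\bar t)\|},
\]
so the Hankel matrix, hence its smallest singular value, is again unchanged across the switch. Therefore $t\mapsto\sigma(t)$ is continuous and nonincreasing on the whole concatenated interval, from which
\[
\sigma\bigl(\epsilon_2\,\delta(\epsilon_2)\bigr)\;\le\;\sigma(\bar t)\;=\;\sigma\bigl(\epsilon_1\,\delta(\bar t)\bigr)\;\le\;\sigma\bigl(\epsilon_1\,\delta(\epsilon_1)\bigr),
\]
and, $\epsilon_1<\epsilon_2\le\epsilon^*$ being arbitrary, the asserted monotonicity on $\{\epsilon<\epsilon^*\}$. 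At the numerical level the exact flows are replaced by the explicit Euler scheme of Algorithm \ref{alg:controlODE}, whose step-rejection rule accepts only updates with $\sigma^1\le\sigma$; hence the discrete $\sigma$-sequence is nonincreasing within each phase and the continuity argument at the switches is untouched by the discretization --- this is precisely the role of ``the choice of the integration schemes'' in the statement.

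The step I expect to be the real obstacle is making phase (b) rigorous. The monotone decrease of $\sigma$ and the non-stalling of \eqref{eq:odefree} are immediate, but one must also argue that $\|\delta(t)\|$ actually reaches the prescribed level $\epsilon_2/\epsilon_1$ in finite time, rather than converging to some limit below it while $\sigma$ stays strictly positive. The route I would take is a LaSalle-type argument: the $\omega$-limit set of \eqref{eq:odefree} is contained in its stationary set, which by Lemma \ref{lemma:projnonzero} forces $\sigma=0$ there, i.e.\ an effective perturbation norm at least $\epsilon^*$, i.e.\ $\|\delta\|\ge\epsilon^*/\epsilon_1>\epsilon_2/\epsilon_1$; hence the level $\epsilon_2/\epsilon_1$ must be crossed before the trajectory can approach its limit set. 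Turning the ``$\ge\epsilon^*$'' inequality into a rigorous statement uses the standing assumption that $\epsilon^*$ is the smallest size at which a singularizing Hankel perturbation exists; a fully careful version would additionally invoke the (generic) piecewise-smooth dependence of $\delta(\epsilon)$ on $\epsilon$ along the branch of the simple smallest singular value, so that the minimizers in phases (a) and (c) are well defined and are connected by the free flow.
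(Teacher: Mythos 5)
Your argument follows essentially the same route as the paper: decrease of $\sigma$ along each phase (Theorem \ref{th:monotonicity} for the constrained flow, the gradient structure of \eqref{eq:odefree} for the free flow), continuity of the effective perturbation across the two switches via the stopping rule $\|\delta(\bar t)\|=\epsilon_2/\epsilon_1$, and the step-rejection rule of Algorithm \ref{alg:controlODE} to transfer the monotonicity to the discrete level; the chain $\sigma(\epsilon_2\delta(\epsilon_2))\le\sigma(\epsilon_1\delta(\bar t))\le\sigma(\epsilon_1\delta(\epsilon_1))$ is exactly the content of the remark, and your LaSalle-type discussion of why the free flow must actually reach the prescribed norm level is more careful than what the paper offers (the paper simply asserts that the free dynamics increases the norm as long as $\sigma>0$, using that $\eps<\eps^*$ keeps the trajectory on the same branch).

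One assertion in phase (a) is wrong, though it does not affect the main chain: Lemma \ref{lemma:projnonzero} does \emph{not} imply that stationary points of the constrained system \eqref{eq:ode} have $\sigma=0$. Stationary points of \eqref{eq:ode} are those $\delta$ for which $\ve(P_{\mathcal H}(uv^{\top}))$ is parallel to $\delta$; these are the local minimizers of $\sigma$ on the unit sphere and generically have $\sigma>0$ --- indeed, if they forced $\sigma=0$ the entire outer iteration on $\epsilon$ would be superfluous. What Lemma \ref{lemma:projnonzero} gives is that $P_{\mathcal H}(uv^{\top})\neq0$ whenever $\sigma>0$ is simple, so that (i) the stationarity condition for \eqref{eq:ode} is a genuine alignment condition rather than trivially satisfied, and (ii) stationary points of the \emph{free} system \eqref{eq:odefree} do force $\sigma=0$, which is the fact you correctly use in phase (b). You should simply delete the $\sigma=0$ claim for \eqref{eq:ode} and justify convergence of phase (a) by the standard behaviour of the gradient flow (the existence of the minimizer $\delta(\epsilon_1)$ is anyway taken for granted by the paper as well).
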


The numerical scheme for the integration of \eqref{eq:odefree} is very similar to the one presented in Algorithm \ref{alg:controlODE}, (with the obvious changes).
The difference with respect to the previous case is that, by removing the constraints on the computed solution during the integration of the equation, the norm of the perturbation $\delta$ increases, so that it is natural to link  the stopping criterion with the norm of such a perturbation. 

In this way we have a globally decreasing trajectory for $\sigma_{\min}(H_m(\pt))$ (with respect to the outer iteration), as we show in Algorithm \ref{alg:computeeps} 
(where the uppercase $f$ denotes that the corresponding quantities are associated with the free dynamic). 

\begin{algorithm2e}[H]
\caption{Computation of $\epsilon^*$}
\label{alg:computeeps}
\DontPrintSemicolon
\KwData{$p$, $tol$, $\Delta$}
\KwResult{$\epsilon_o$ approximation of $\epsilon^*$}

\Begin{
\nl Set $\epsilon=10^{-2}$           \ \ \ \ \ \ \ \ \ \ \ \ \      \%  starting value \;
\nl Integrate the equation \eqref{eq:ode} \;
 store $\delta, \sigma=\sigma_{\min} (H_m(p+ \epsilon \delta))$ \;
   \nl \While{$\sigma > tol$}
       {\nl $\epsilon_1=\epsilon + \Delta$  \;
    \nl       integrate the equation \eqref{eq:odefree} with initial value $p + \epsilon / \epsilon_1 \delta$ \;
           and stop when $ \epsilon / \epsilon_1 \| \delta^f \|_2 \geq 1$ \;
           store $\delta^{f}, \sigma^f=\sigma_{\min} (H_m(p+ \epsilon / \epsilon_1  \delta^f))$  \;
           set $\epsilon_1= \epsilon  \| \delta^f \|_2$   \;
       \nl  integrate equation \eqref{eq:ode} with initial value $p + \epsilon_1 \delta^f$  \;
         store $\delta, \sigma=\sigma_{\min}(H_m(p + \epsilon_1 \delta))$    \;
          set $\epsilon=\epsilon_1$  }

    \nl $\epsilon_o=\epsilon$

 }
\end{algorithm2e}

If we apply Algorithm \ref{alg:computeeps} to some randomly chosen test problem, we get the result of Figure~\ref{fig:testex}, where we can observe both the monotonicity of the objective functional and 
 the succession between free and constrained dynamics.
\begin{figure}[htb]
	\centering
	\includegraphics[width=.72\linewidth]{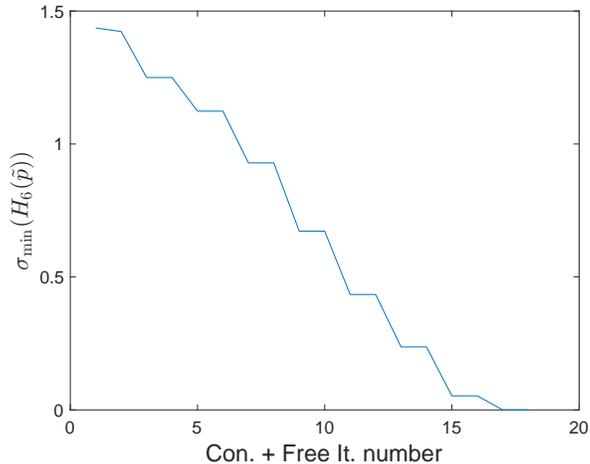}
	\caption{Plot of the functional $\sigma_{\min}(H_3(\hat{p}))$  as function of the  iteration number.\label{fig:testex}}
\end{figure}

\begin{obs}
In the previous sections we considered real numbers just to simplify the notation. All the arguments still hold true also in the complex case (as needed in the solution of the problem described in Section \ref{sec:polfrommoments}).
\end{obs}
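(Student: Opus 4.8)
The statement to be ``proved'' is the closing Remark: that the entire development of Section~\ref{sec:alg} carries over from real data $p\in\R^T$ to complex data $p\in\C^T$. My plan is to walk through each ingredient of the section in order, replacing the transpose $(\cdot)^{\top}$ by the conjugate transpose $(\cdot)^{*}$ and --- this being the only genuinely substantive change --- to regard $\C^T$ as the real vector space $\R^{2T}$ equipped with the real inner product $\langle x,y\rangle_{\mathbb R}:=\mathrm{Re}(x^{*}y)$ (equivalently, on matrices, the real Frobenius product $\mathrm{Re}\,\mathrm{trace}(A^{*}B)$), since $D$ is complex-valued but $\delta\mapsto\sigma$ is a real-valued functional and $\{\,\|\delta\|=1\,\}$ is a genuine real Riemannian sphere of real dimension $2T-1$.

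First I would redo Lemma~\ref{lemma:deriveig}: differentiating $D(t)^{*}D(t)\,v(t)=\sigma(t)^{2}v(t)$ along the curve and using $Dv=\sigma u$ and $D^{*}u=\sigma v$ gives $\tfrac{d}{dt}\sigma^{2}=2\sigma\,\mathrm{Re}(u_0^{*}\dot D v_0)$, hence $\dot\sigma=\mathrm{Re}(u_0^{*}\dot D v_0)$; the real part is forced precisely because $\sigma$ is real while $\dot D$ is not, and it collapses to the stated formula when everything is real. The remaining lemmas are then transcription. In Lemma~\ref{lemma:formulaproj}, anti-diagonal averaging is still the orthogonal projector onto $\mathcal H$ for $\langle\cdot,\cdot\rangle_{\mathbb R}$ (the computation is field-independent, and since $\mathcal H$ is a complex subspace, real- and complex-orthogonality to $\mathcal H$ coincide). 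In Lemma~\ref{lemma:projnonzero}, the same chain gives $0=\mathrm{Re}\langle P_{\mathcal H}(uv^{*}),H(p)\rangle=\mathrm{Re}(u^{*}H(p)v)=\sigma>0$, a contradiction. In Lemma~\ref{lemma:ode}, the objective becomes $\mathrm{Re}(u^{*}H_m(\dot\delta)v)=\mathrm{Re}\langle P_{\mathcal H}(uv^{*}),H_m(\dot\delta)\rangle$, and minimizing it over $\{\,\|\dot\delta\|=1,\ \mathrm{Re}(\delta^{*}\dot\delta)=0\,\}$ reproduces \eqref{eq:opt}--\eqref{eq:ode} with $(\cdot)^{\top}\to(\cdot)^{*}$ and the coefficient $\mathrm{Re}\langle\delta,\ve(P_{\mathcal H}(uv^{*}))\rangle$. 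Finally, for Theorem~\ref{th:monotonicity}: $\tfrac{d}{dt}\|\delta\|^{2}=2\,\mathrm{Re}(\delta^{*}\dot\delta)=0$ gives norm conservation, and substituting \eqref{eq:ode} into $\dot\sigma=\epsilon\,\mathrm{Re}(u^{*}H_m(\dot\delta)v)$ yields $\dot\sigma=\epsilon\bigl(-\|\ve(P_{\mathcal H}(uv^{*}))\|^{2}+(\mathrm{Re}\langle\delta,\ve(P_{\mathcal H}(uv^{*}))\rangle)^{2}\bigr)\le 0$ by Cauchy--Schwarz and $\|\delta\|=1$, using $|\mathrm{Re}\langle\delta,w\rangle|\le\|\delta\|\,\|w\|$. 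The free dynamics \eqref{eq:odefree}, the free/constrained alternation of Algorithm~\ref{alg:computeeps}, and the Euler integration of Algorithm~\ref{alg:controlODE} are then structurally unchanged once a complex SVD routine is used.

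The main obstacle --- and really the only point where one has to think rather than transcribe --- is exactly this passage to the real inner product on $\C^T\cong\R^{2T}$ and the attendant $\mathrm{Re}(\cdot)$ in the first variation of $\sigma$; once that is settled, the projection formula, the Cauchy--Schwarz estimate, the unit-norm constraint and norm conservation all survive verbatim, and no new idea is required. A minor bookkeeping choice worth fixing is to keep $\epsilon$ a real positive scalar (it measures the perturbation size $\|\pt-p\|$), which is what I would do: allowing $\epsilon\in\C$ would force splitting $H_m(\epsilon\dot\delta)$ into real and imaginary parts for no benefit.
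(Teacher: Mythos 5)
Your proposal is correct and follows exactly the route the paper intends for this remark (which it asserts without proof): replace $(\cdot)^{\top}$ by $(\cdot)^{*}$, view $\C^T$ as $\R^{2T}$ with the inner product $\mathrm{Re}\langle\cdot,\cdot\rangle$, and note that the derivative formula becomes $\dot\sigma=\mathrm{Re}(u^{*}\dot D v)$, after which the projection, Cauchy--Schwarz and norm-conservation arguments go through verbatim. Your writeup in fact supplies the one nontrivial detail the paper leaves implicit (the $\mathrm{Re}(\cdot)$ in the complex analogue of Lemma~\ref{lemma:deriveig} and in the constraint $\mathrm{Re}(\delta^{*}\dot\delta)=0$), so nothing is missing.
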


\subsection{Different weights and missing coefficients}
A key result of the proposed approach is Lemma \ref{lemma:deriveig}, hence the algorithm is based on the solution of matrix nearness problems in the Frobenius metric. This motivates the choice of the Frobenius norm of the Hankel matrix in \eqref{eq:probFnorm}. The algorithm is then restated on the vector $p$ by  choosing $p = \ve (H_m(p))$.  However it is possible to add some different weights on the coefficients of $p$ by preserving all the properties previously shown.  This is done by appropriately changing the projection operator $P_{\mathcal{H}}$.  In particular,  once we average along the anti diagonals to compute the coefficients of the projected Hankel matrix, we multiply each entry of the output vector   by the corresponding weight. Using Matlab notation for the entry-wise product and denoting as $\delta_w$ the (normalized) vector whose entries are weighted, equation~\eqref{eq:ode} would be replaced by
$$
  \dot{\delta}_w = {}-w.* \ve(P_{\mathcal{H}} (uv^{\top})) + \langle \delta_w, w.*\ve(P_{\mathcal{H}} (uv^{\top})) \rangle \delta_w.
$$
Similarly we can also fix some entries of the vector $p$:  It is sufficient to set as $0$ the corresponding weights so that 
 the algorithm leaves the coefficients untouched during the iterations, and perturb only a subset of the entries of $p$.
 
In the case some coefficients are missing, it is enough to  choose some initial values for them (e.g. by averaging their neighbors) and run the algorithm.

\section{Numerical results}
\label{sec:numtest}
In this section we run some numerical examples.
We consider both problems described in Section \ref{sec:appl}.
In the problem of system identification we make a comparison with the  function \textit{slra} from the SLRA Toolbox \cite{UsevichMarkovsky2} in order to test the performances of the proposed algorithm  
 (we cannot do the same with the problem of polygons reconstruction since the available function \textit{slra} does not work with complex numbers).

\subsection{Identification of linear time-invariant  models}
The problem is the one described in Section \ref{sec:sysid}. 
The goal of the experiments is to recover a time series from its noisy entries. First, we construct a time series, which is a response of a linear time-invariant model. 
The simulation setup is as follows:
\begin{enumerate}
\item a random linear time-invariant system of order $n$ is selected (\texttt{drss} function in MATLAB);
\item the ``true data'' $p_0$ is the response of the system selected on step 1 to a random initial condition;
\item the ``noisy data'' $p$ is 
\begin{equation}
\label{eq:buildp}
p = p_0 + \tau r \|p_0\|_2/\|r\|_2,
\end{equation}
where $r$ is a zero mean white Gaussian random vector with unit variance and $\tau>0$ is parameter (called the noise level). 
\end{enumerate}


In the following, we compare the results computed by the proposed approach with the ones computed by the function \textit{slra} from \cite{UsevichMarkovsky2}.
However the solution computed by the proposed method is not exactly rank deficient, so that a further comparison is done with a refinement obtained by \textit{combining} the two functions:  we initialize the function \textit{slra} with the solution found by the ODE based method. In this way, we obtain a rank deficient Hankel matrix. 

\paragraph{Test on the accuracy of the computed solution}
In this first example we test the accuracy on the computed solution.   On a single run  we expect that different local optimization methods  compute different solutions, hence we look at the average behavior on several runs of the two algorithms. 

Figure~\ref{fig:distsvdsord5} shows the results from a model of order $n = 5$.
\begin{figure}[htb]
	\centering
	\includegraphics[width=.72\linewidth]{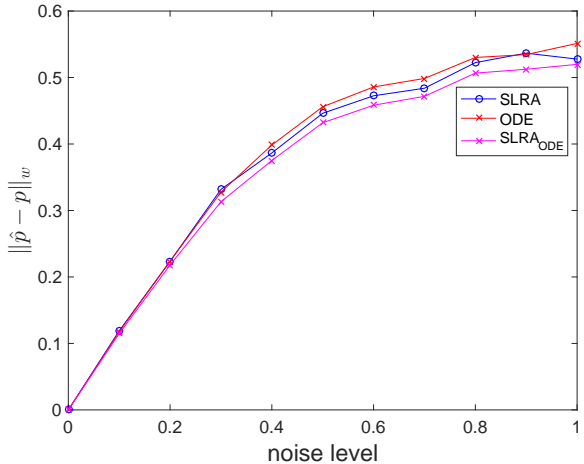}
	\caption{Identification of the output of a linear time-invariant model of order $5$ as function of the noise level  (average over fifty perturbations).\label{fig:distsvdsord5}}
\end{figure}
We observe how the distances associated with  all the computed solutions are very close; however the function \textit{slra} initialized with the solution of the proposed approach is able to  achieve smaller values of distance. The computational time of the function \textit{slra} is smaller than the one of the proposed ODE based method. 

The next experiment shows an interesting property of the proposed local optimization approach.  
\paragraph{Dependence on the initial estimate}
We analyzed in the previous experiment which is the distance computed by the different algorithms; however the solutions found by local optimization approaches usually  strongly depend on the initial estimate. An interesting advantage of the proposed algorithm appears to be the robustness with respect to the initial approximation, i.e., the computed solutions are (almost) independent from the initial estimate. In practice, this means that choosing two different initial directions, the proposed ODE-based algorithm finds (almost) the same solutions. 

The standard initialization for the two algorithms (the ones used by default) are the following:
\begin{itemize}
\item ODE: the starting optimal perturbation $\delta$ is chosen as the steepest descent direction for the smallest singular value of the  Hankel matrix $H_m(p)$, i.e., $\ve(-P_{\mathcal{H}}(uv^{\top}))$ (divided by its  norm), where $u, v$ are the left and right singular vector associated with $\sigma_{\min}(H_m(p))$;
\item SLRA: the initial approximation is the unstructured low-rank approximation of the Hankel matrix $H_m(p)$.
\end{itemize}
In the next experiment (Figure~\ref{fig:iniest}) we perturb (using a random perturbation coming from a normal distribution with zero mean and standard deviation $0.5$) the standard initial estimates of the two algorithms in order to observe how the final computed solutions change. (Again, we show the average behavior over several runs.)
	\begin{figure}[htb]
	\centering
	\includegraphics[width=.72\linewidth]{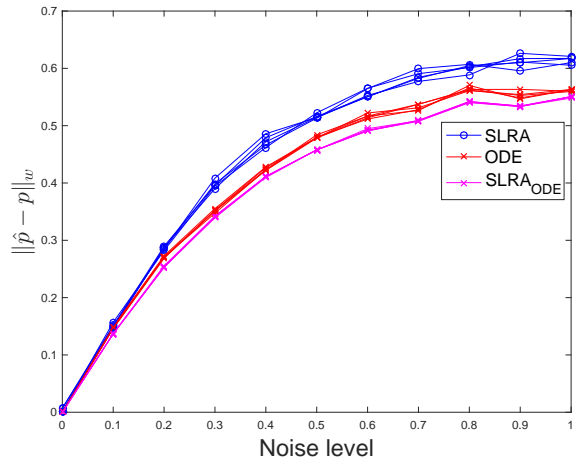}
	\caption{Identification of the output of  system of order $5$: error computed  as function of the noise level (average over fifty perturbations). Each plot corresponds to a different initial estimate. \label{fig:iniest}}
\end{figure}
It happens that all  the plots associated with the ODE based algorithm are very close (they are all almost overlapping) so we cannot distinguish them in the figure. On the other hand, about the function \textit{slra}, the different plots associated with different   initial estimate spread off more, especially when the level of noise increases (hence the optimization problem becomes more difficult). We can also observe that by changing the initial estimate the solution computed by \textit{slra} is less accurate (comparing with the results in Figure~\ref{fig:distsvdsord5}).   But the initialization of \textit{slra} with the solution computed by the ODE method still leads to the best results; in this case we can not distinguish the plots hence the independence from the initial condition is more clear (this is because the outputs of the ODE method are all very close). 

\begin{obs}
Because of normalization issues we are not able to establish a perfect link between the initial estimates for the two different algorithms. However  a perturbation on the optimal initial estimate (the one used by the algorithms by default) it should be enough to show the robustness of the proposed algorithm with respect to the initial condition.  
\end{obs}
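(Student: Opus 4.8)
The plan is to justify the Remark's two assertions in turn, treating it as a methodological claim: first that no exact correspondence between the two initializations exists, and then that perturbing each method's default estimate is nonetheless a sound robustness diagnostic. To make the first assertion precise I would spell out what ``initial estimate'' means for each method. The proposed ODE-based algorithm is parametrized by a direction $\delta \in \R^T$ constrained to the unit sphere $\{\delta : \|\delta\| = 1\}$ together with the scalar $\epsilon$ advanced at the outer level; its initialization is therefore a point $\delta_0$ on that sphere (by default $\ve(-P_{\mathcal{H}}(uv^\top))$ divided by its norm), the radial magnitude being recovered only a posteriori through $\epsilon$. The function \textit{slra}, by contrast, is initialized with a full parameter vector $\pt \in \R^T$, namely the vectorization of the unstructured low-rank approximation of $H_m(p)$, carrying no norm constraint. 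I would argue that there is no canonical bijection between a unit direction and an unconstrained point in $\R^T$: matching them forces an arbitrary choice of scaling, which is exactly the normalization issue the Remark refers to, so no \emph{identical} pair of initial data can be imposed on the two algorithms.

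For the second assertion I would formalize robustness to initial conditions as near-constancy of the solution map $\Phi : (\text{initial estimate}) \mapsto (\text{computed minimizer})$ over a neighborhood of the default initialization. The experimental diagnostic is to draw a random perturbation from a fixed distribution (zero mean, standard deviation $0.5$), add it to each algorithm's \emph{own} default estimate, and compare the resulting minimizers over several runs. I would observe that, precisely because the two initializations cannot be matched exactly, the fair experiment is this one: perturb each method around its own optimum with perturbations drawn from a common distribution, then compare the spread of the outputs. Overlapping output curves, as seen for the ODE method in Figure~\ref{fig:iniest}, indicate that $\Phi$ is essentially flat near the default (robustness), while a widening spread, as seen for \textit{slra}, indicates sensitivity. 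I would further note that this flatness is consistent with the structure of the inner flow: by Theorem~\ref{th:monotonicity} the gradient system drives every nearby direction $\delta_0$ monotonically to a stationary point of $\delta \mapsto \sigma_{\min}(H_m(p + \epsilon\delta))$, so nearby initial directions are expected to converge to the same limiting perturbation.

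The main obstacle is the very mismatch of parametrizations established in the first step: since there is no isometry between the unit sphere of directions and the ambient parameter space, one cannot claim that ``the same perturbation'' was applied to both methods, and hence cannot compare their sensitivities on a literal common footing. I would resolve this by shifting the notion of fairness from \emph{identical initial data} to \emph{identically distributed perturbations of each method's own optimum}; this removes any need for a correspondence between the two initialization spaces while still exposing the qualitative difference in how the two solution maps respond to perturbation. The remaining step is then purely observational, reading off from Figure~\ref{fig:iniest} that the ODE curves coincide whereas the \textit{slra} curves diverge as the noise level grows, which completes the justification of the Remark.
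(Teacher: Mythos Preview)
The statement you are addressing is a \texttt{Remark} (the \texttt{obs} environment in the paper), not a theorem, and the paper supplies no proof for it whatsoever: it stands as an informal methodological comment inserted after the experiment of Figure~\ref{fig:iniest}. There is therefore nothing in the paper to compare your argument against.

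Your proposal is a reasonable and internally coherent \emph{rationalization} of the remark, and the two points you extract---the mismatch of parametrizations (unit-sphere direction for the ODE method versus unconstrained vector for \textit{slra}) and the replacement of identical initial data by identically distributed perturbations around each method's default---are exactly the intended reading. If anything, you have supplied substantially more structure than the authors do: they never formalize robustness via a solution map $\Phi$, never invoke Theorem~\ref{th:monotonicity} as an explanation for the observed flatness, and never articulate the ``fairness'' shift you describe. One small caution: your appeal to Theorem~\ref{th:monotonicity} only guarantees monotone decrease of $\sigma$ along each trajectory, not that nearby initial directions converge to the \emph{same} stationary point, so that step is heuristic rather than rigorous---which is fine here, since the remark itself is heuristic.
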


\subsection{Reconstructing polygons from moments}
We show here some numerical examples for the problem described in Section \ref{sec:polfrommoments} or in details in \cite{MilanfarVerghese}. We consider a triangle whose vertices are given by the following three points in the complex plane:
\begin{equation}
\label{ex:vertices}
\begin{aligned}
 z_1&= -0.4655 + 0.2201 i, \\
 z_2&= \ \ 0.0082 + 0.4599 i \\
 z_3&= -0.3283 - 0.1809 i.
 \end{aligned}
\end{equation}
Following the procedure described in \cite{MilanfarVerghese} we aim at reconstructing such a triangle from a set of (perturbed) complex moments by solving a Hankel low-rank approximation problem. Since we deal with complex valued data  we only use the proposed method to study the problem without any comparison.
\begin{figure}[!ht] 
	\begin{minipage}[b]{0.5\linewidth}
		\label{fig:noise1}
		\centering
		\includegraphics[width=5cm, height=5cm]{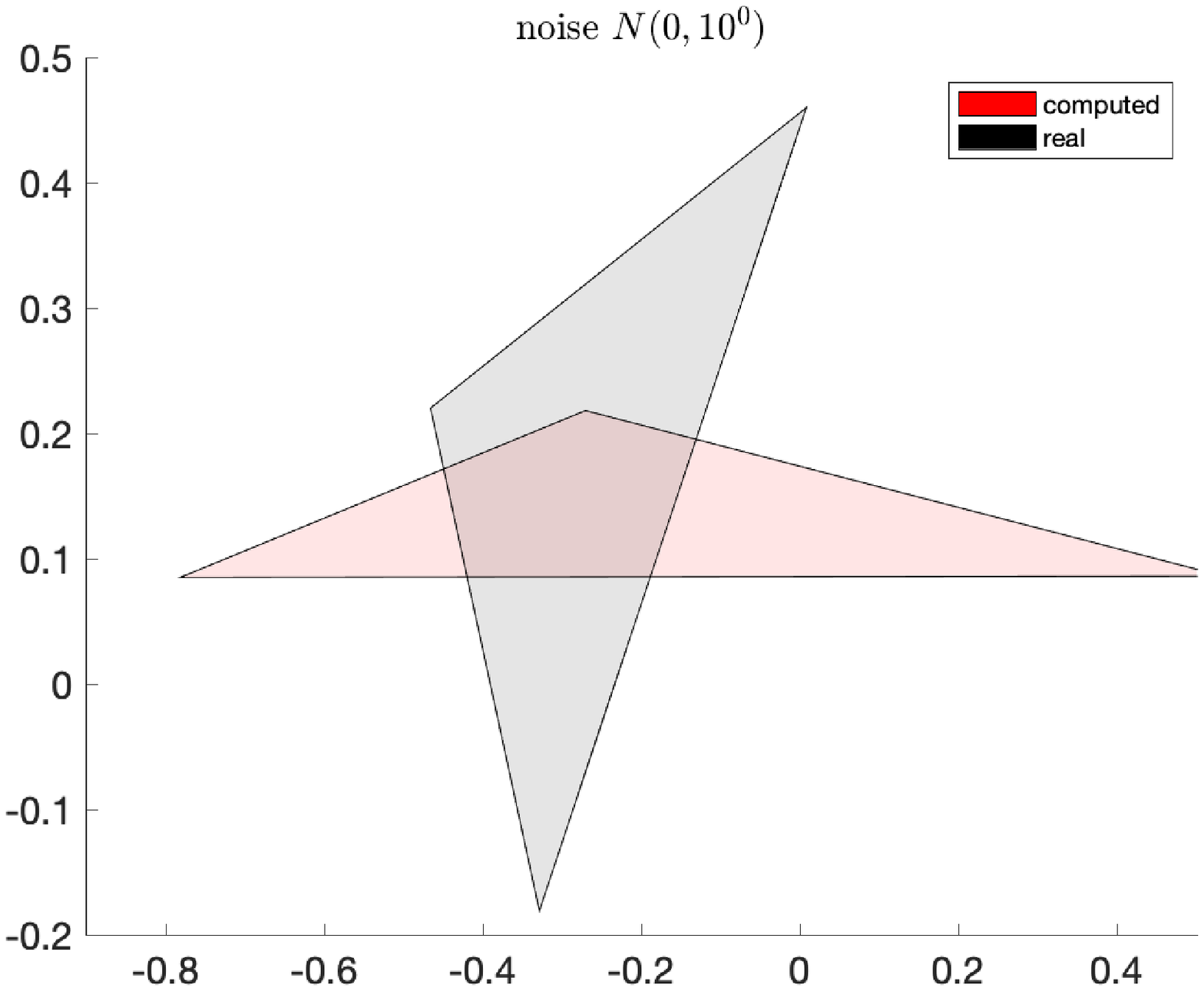} 
		\caption{Noise level $10^{0}$} 
		\vspace{4ex}
	\end{minipage}
	\begin{minipage}[b]{0.5\linewidth}
		\label{fig:noise2}
		\centering
		\includegraphics[width=5cm, height=5cm]{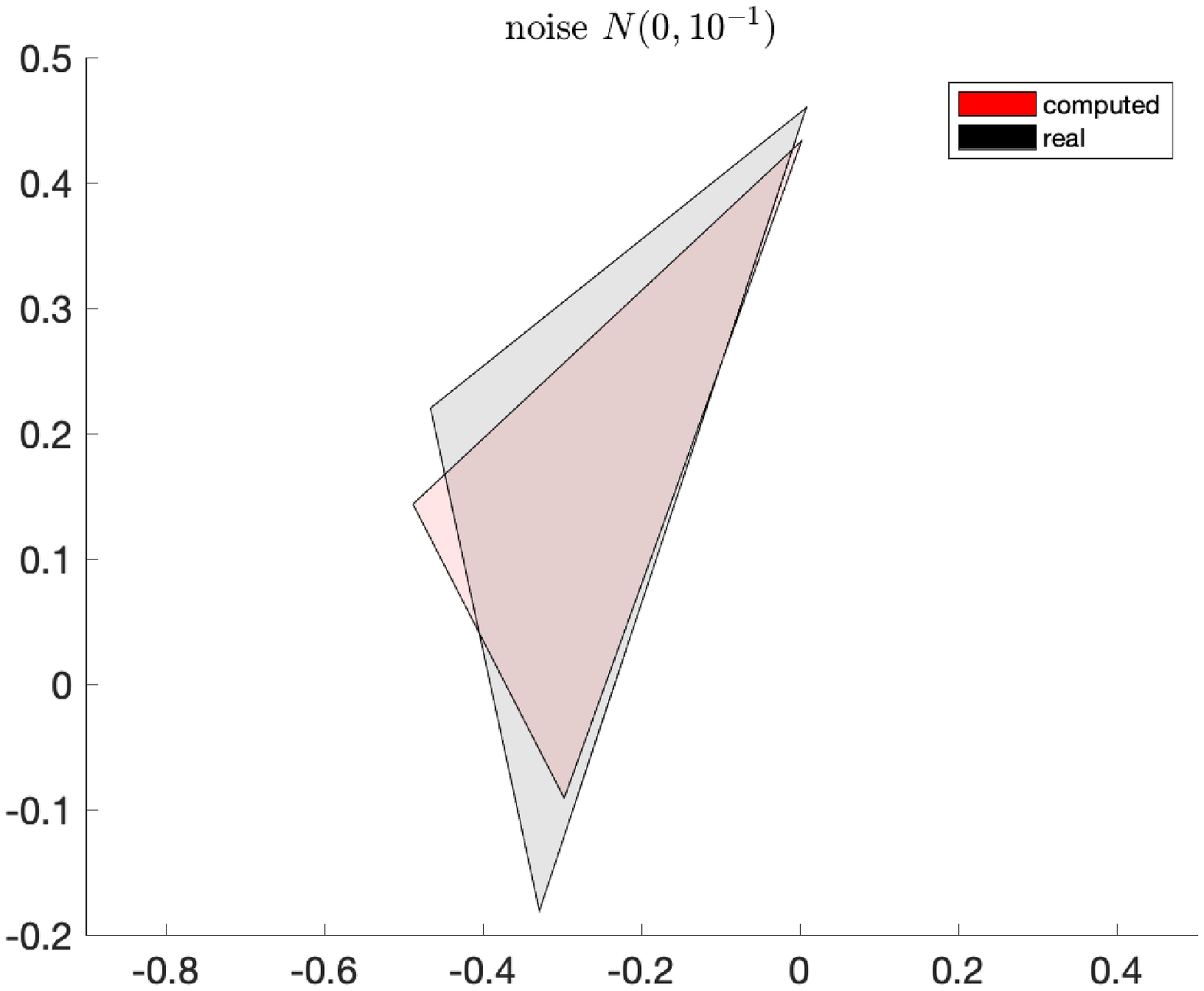} 
		\caption{Noise level $10^{-1}$} 
		\vspace{4ex}
	\end{minipage} 
	\begin{minipage}[b]{0.5\linewidth}
		\centering
		\includegraphics[width=5cm, height=5cm]{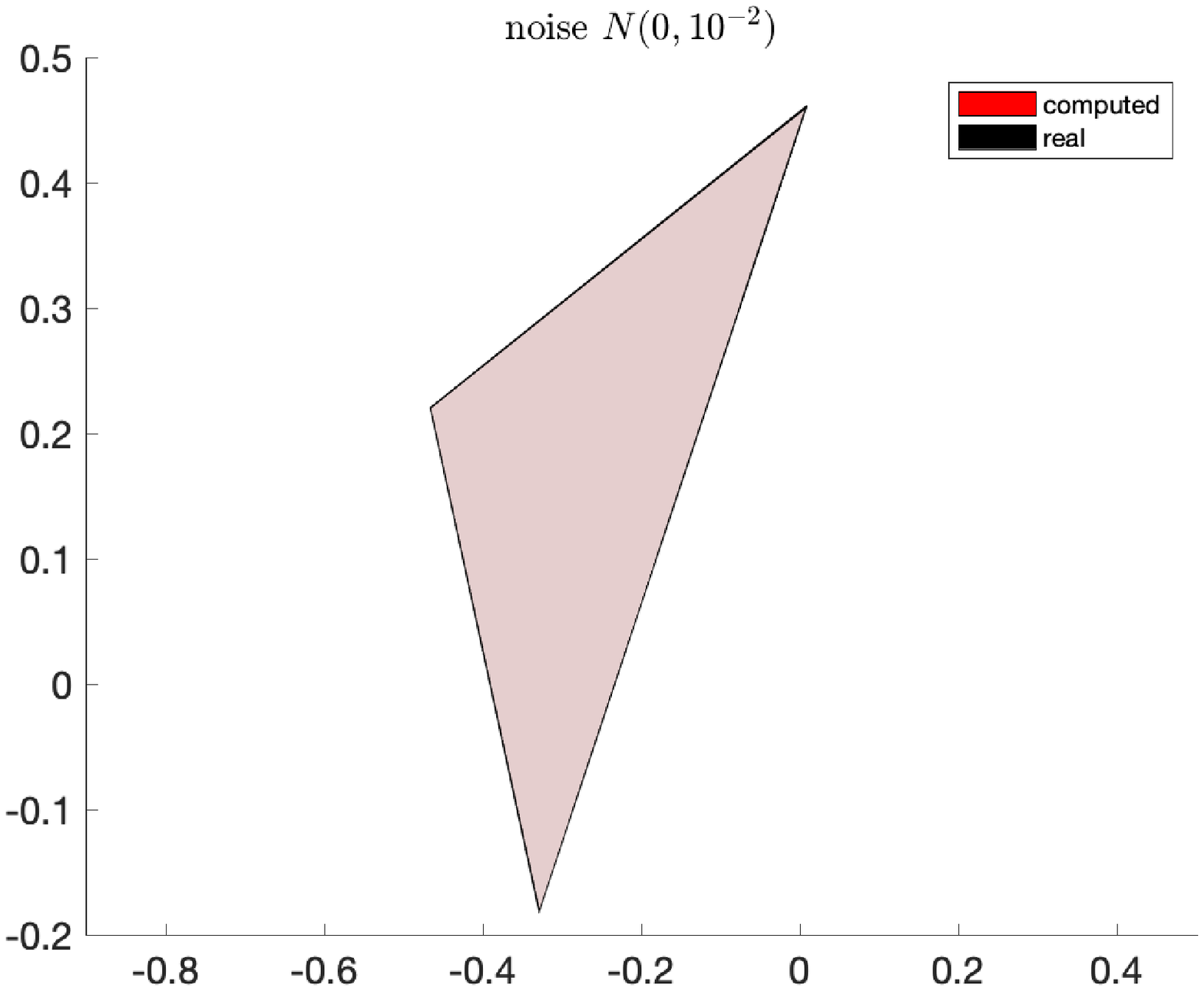} 
		\caption{Noise level $10^{-2}$, \label{fig:noise3}} 
		\vspace{4ex}
	\end{minipage}
	\begin{minipage}[b]{0.5\linewidth}
		\label{fig:noise4}
		\centering
		\includegraphics[width=5cm, height=5cm]{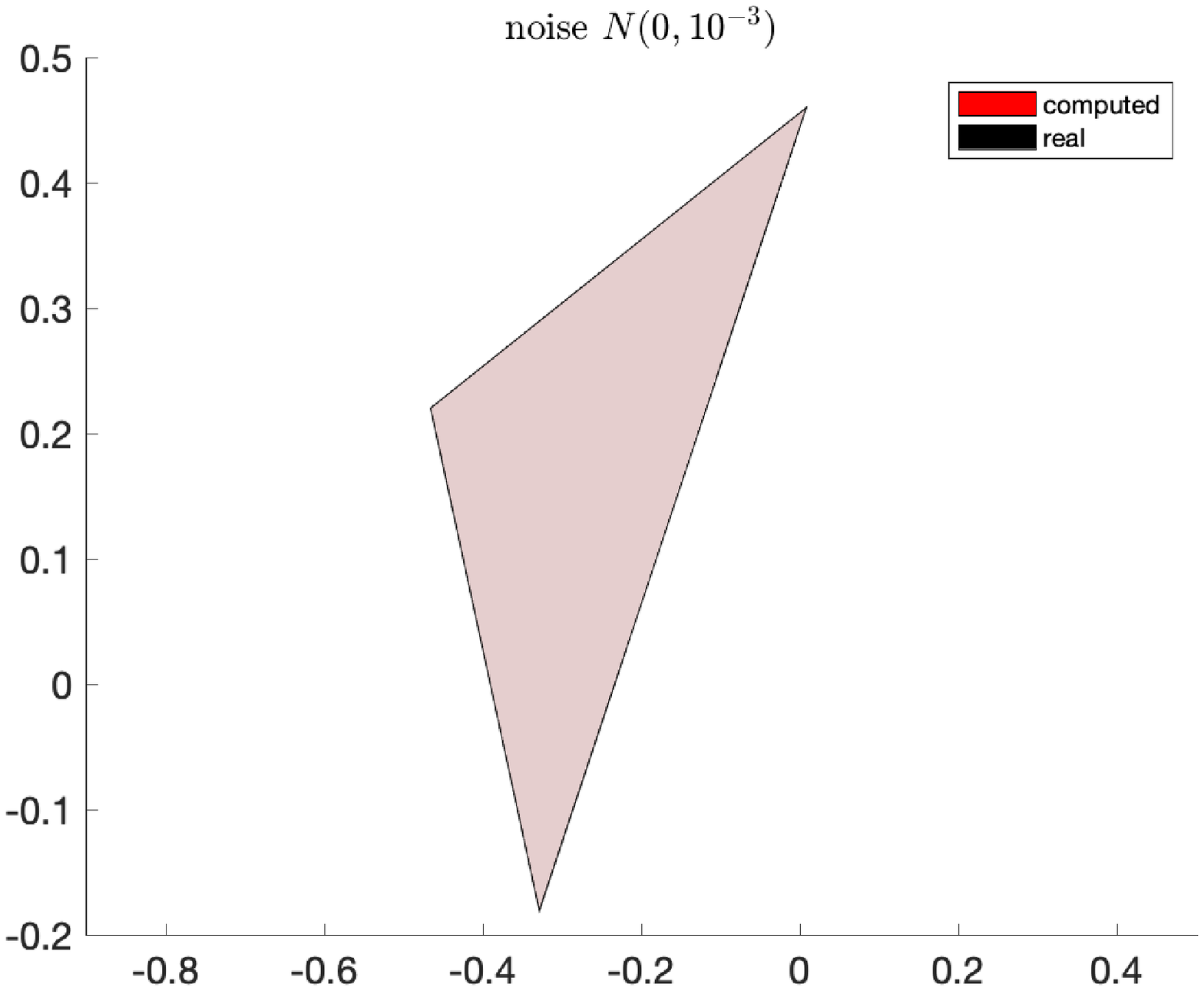} 
		\caption{Noise level $10^{-3}$} 
		\vspace{4ex}
	\end{minipage} 
	\caption{Triangle from moments reconstruction: numerical results for different noise levels \label{fig:graphic}}
\end{figure}

We start from a set $T = (\tau_0, \dots, \tau_{N-1})$ of $N$ complex moments and we add a random Gaussian perturbation to both their real and imaginary part. Then we solve the Hankel low-rank approximation problem for the matrix $H_{N-n}(T)$ and we recover the set of vertices from its kernel,  as described in Section \ref{sec:polfrommoments} \cite{MilanfarVerghese}. The error between the exact and the approximating solution is measured by looking at the two sets of vertices:
$$
e = \Bigg\lVert \begin{pmatrix}
z_1 \\ z_2 \\ z_3
\end{pmatrix} - \begin{pmatrix}
\hat{z}_1 \\ \hat{z}_2 \\ \hat{z}_3
\end{pmatrix} \Bigg\rVert_2
$$
where the vertices are ordered by decreasing real part.
 All the following results are the average over fifty runs (we generate fifty random perturbations and we consider the average solution so that the results of the experiments are not misleading), 
 
 First we show some graphical results, in order to see what actually happens. In Figure~\ref{fig:graphic}  we observe the numerical results for different levels of noise (the scalar $\tau$ in \eqref{eq:buildp}) and $9$  complex moments.

 In the following analysis we want to analyze how the error behaves as function of the level of noise (for a fixed number of moments) and as function of the number of moments (for a fixed level of noise).
 
 In Figure~\ref{fig:errorvsnoise}  we observe how the error increases linearly (using a logarithmic scale on both axis) with the level of noise (we considered~$9$ complex moments).
 \begin{figure}[htb]
 	\centering
 	\includegraphics[width=.72\linewidth]{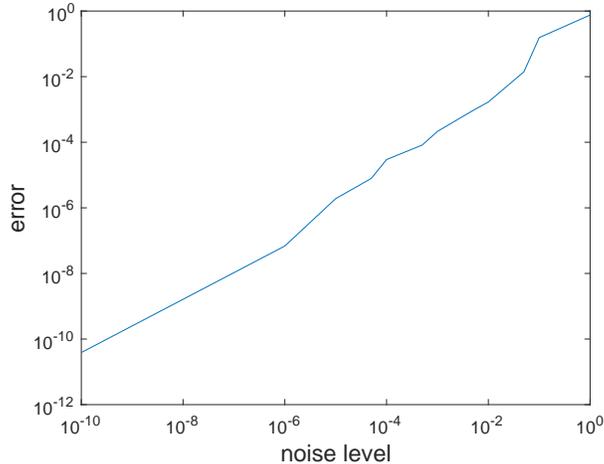}
 	\caption{\label{fig:errorvsnoise}Error as function of the noise level}
 \end{figure}
 
 The next analysis can help to understand how the error changes with the number of considered complex moments. We fix the noise level to $10^{-3}$. First of all we observe how the modulus of the (exact) complex moments behaves for an increasing number $N$ (Figure~\ref{fig:abst}).
 \begin{figure}[htb]
 	\centering
 	\includegraphics[width=.72\linewidth]{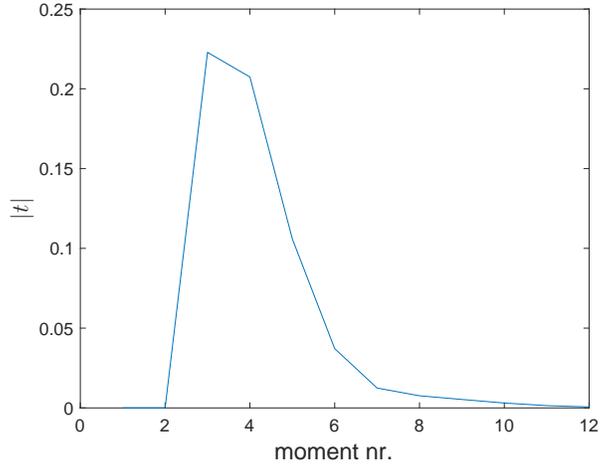}
 	\caption{\label{fig:abst}Modulus of the first (exact) $16$ complex moments for the vertices in~\eqref{ex:vertices}}
 \end{figure}
 The first two moments are always zero by definition. In \cite{MilanfarVerghese} it  is proved that a minimum of $2n+1$ moments ($n$ is the number of vertices) is necessary for the reconstruction. From Figure~\ref{fig:abst} we can see that
 the modulus of the moments is decreasing, so we can expect that the smaller is the magnitude the less is the influence of a moment in the reconstruction. 
  We can plot now how the error changes for an increasing number of complex moments (Figure~\ref{fig:errorvsmoments}), where we observe how the error is  decreasing for an increasing number of complex moments. Consequently, according  to the needed accuracy on the computed solution we can choose to use a certain  number of complex moments in order to optimize the computational cost.    
  \begin{figure}[!ht]
  	\centering
  	\includegraphics[width=.72\linewidth]{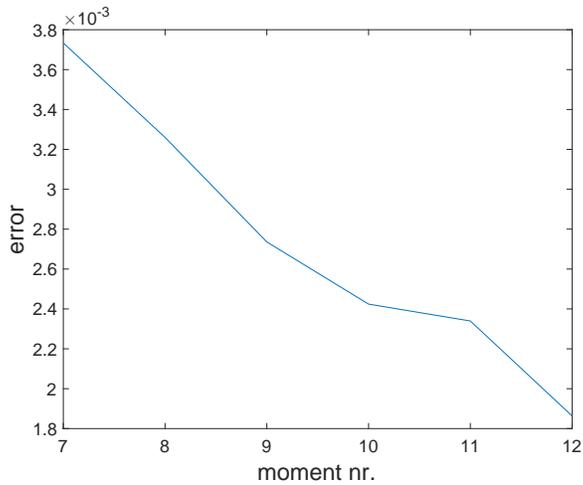}
  	\caption{\label{fig:errorvsmoments}Error for an increasing number of complex moments. The noise level is fixed.}
  \end{figure}
 
 \begin{obs}
 This last analysis on the dependence of the error from the number of complex moments is strongly linked to the choice of the set of vertices. This is because the complex moments (and their magnitudes) are functions of the vertices. 
 \end{obs}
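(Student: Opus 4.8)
The statement is qualitative, so the plan is to trace explicitly the functional dependence of the data on the vertices and to follow how it propagates to the reconstruction error, rather than to establish a sharp quantitative bound. First I would recall from \eqref{eq:complexmoments} that each moment $\tau_k = \sum_{j=1}^n a_j z_j^k$ is a function of the vertices $z_1, \dots, z_n$: the powers $z_j^k$ depend on them directly, and the coefficients $a_j$ are themselves rational functions of the vertices through the area formula defining $A_j$. Consequently the whole data vector $\tau = (\tau_0, \dots, \tau_N)^\top$ from which we build $H_{N-n}(\tau)$ is determined by the vertices, and so is every quantity we subsequently compute, including the kernel used to recover $\hat z_1, \dots, \hat z_n$.

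Second I would isolate the role of the magnitudes. For large $k$ the sum is dominated by the vertex of largest modulus, so that
\[
|\tau_k| \;\approx\; |a_{j^*}|\,|z_{j^*}|^k, \qquad |z_{j^*}| = \max_{1 \le j \le n} |z_j|,
\]
which shows that the geometric rate at which $|\tau_k|$ grows or decays with $k$ is governed precisely by $\max_j |z_j|$. This is exactly the decay visible in Figure~\ref{fig:abst} for the vertices in \eqref{ex:vertices}, all of which have modulus smaller than $1$.

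Third I would connect the magnitudes to the error. In the noisy setting the perturbation added to each moment has a fixed level (see \eqref{eq:buildp}), so the effective signal-to-noise ratio of the $k$-th moment scales like $|\tau_k|$ divided by the noise level; a moment whose magnitude is comparable to or below the noise contributes essentially noise to $H_{N-n}(\tau)$ and hence to its kernel, so adding it cannot meaningfully reduce the error. Therefore the marginal value of the $(N+1)$-th moment---and with it the shape of the curve ``error versus number of moments'' in Figure~\ref{fig:errorvsmoments}---is controlled by the decay rate of $|\tau_k|$, which by the previous step is a function of the vertices. In the example all $|z_j| < 1$, so the moments decay geometrically and the error-versus-$N$ curve saturates; a configuration with some $|z_j| > 1$ would instead yield growing moments and a qualitatively different dependence, which establishes the claimed link.

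The main obstacle is making the third step quantitative: a fully rigorous argument would require a perturbation bound for the null space of the nearly rank-deficient Hankel matrix $H_{N-n}(\tau)$ in terms of the entrywise noise weighted by the moment magnitudes, relating the perturbation of the computed kernel to the per-entry signal-to-noise ratios. Since the statement is only a remark, I would keep the argument at the level of the dominant-vertex asymptotics and the signal-to-noise heuristic, which already exhibit the dependence on the vertices, rather than proving a sharp bound.
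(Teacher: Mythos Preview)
The paper offers no proof of this statement: it is a \emph{Remark} (the \texttt{obs} environment), stated as a self-evident observation immediately after Figures~\ref{fig:abst} and~\ref{fig:errorvsmoments}, with the one-line justification already contained in its second sentence (``the complex moments \dots are functions of the vertices''). There is nothing further to compare against.

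Your argument is nonetheless a correct and coherent elaboration of that one sentence. Tracing the dependence $\tau_k = \sum_j a_j z_j^k$ back to the vertices is exactly right; the dominant-vertex asymptotics $|\tau_k| \approx |a_{j^*}|\,|z_{j^*}|^k$ correctly explains the decay in Figure~\ref{fig:abst} (all $|z_j|<1$ in \eqref{ex:vertices}); and the signal-to-noise heuristic is a sensible way to link that decay to the saturation of the error curve. One small imprecision: in the noise model \eqref{eq:buildp} the perturbation is scaled globally by $\|p_0\|_2$, so ``fixed level per moment'' is only approximately true, but for white Gaussian $r$ the per-entry variance is indeed uniform and your conclusion is unaffected. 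You correctly flag that making the third step rigorous would require a kernel-perturbation bound for the Hankel matrix; since the paper does not attempt this either, stopping at the heuristic level is appropriate. In short, you have supplied more than the paper does, and what you supplied is sound.
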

 
 \section{Conclusion and future work}
 \label{sec:concl}
 We proposed a new algorithm for solving Hankel structured low-rank approximation problems. It is based on a double iteration method which makes the smallest singular value of the data matrix decreasing along the direction described by a gradient system. We saw how the algorithm performs similarly  as the function \textit{slra}  for what concerns the accuracy on the computed solution and how to use it as initial approximation in order to achieve an improvement. Moreover the proposed approach  turns out to be more robust with respect to the initial approximation given in input. 
 The algorithm can be extended to block Hankel matrices, and to mosaic Hankel matrices (block matrices whose blocks are Hankel matrices) arising in applications in the field of system theory and identification. A more challenging task is to extend the proposed  approach to the problem of Hankel low-rank approximation with multiple rank constraints appearing, e.g., in the common dynamics estimation problem in multi-channel signal processing \cite{Markovskycommondyn}.
 Moreover, a similar strategy can be adopted to compute rank reductions greater than one. 
 The optimization of the proposed algorithm in terms of computational cost and time can be object of future work. 

\section*{Acknowledgements}

The research leading to these results has received funding from the European Research Council (ERC) under the European Union's Seventh Framework Programme (FP7/2007--2013) / ERC Grant agreement number 258581 ``Structured low-rank approximation: Theory, algorithms, and applications'' and Fond for Scientific Research Vlaanderen (FWO) projects G028015N ``Decoupling multivariate polynomials in nonlinear system identification'' and G090117N ``Block-oriented nonlinear identification using Volterra series'';  and Fonds de la Recherche Scientifique (FNRS) -- FWO Vlaanderen under Excellence of Science (EOS) Project no 30468160 ``Structured low-rank matrix / tensor approximation: numerical optimization-based algorithms and applications''.

The authors thank K. Usevich for some useful comments and suggestions. 
\bibliographystyle{elsarticle-num}
\bibliography{ref.bib}

\begin{thebibliography}{10}
\expandafter\ifx\csname url\endcsname\relax
  \def\url#1{\texttt{#1}}\fi
\expandafter\ifx\csname urlprefix\endcsname\relax\def\urlprefix{URL }\fi
\expandafter\ifx\csname href\endcsname\relax
  \def\href#1#2{#2} \def\path#1{#1}\fi

\bibitem{BiniPan}
D.~Bini, V.~Y. Pan, Polynomials and Matrix Computations, Birk\"auser Boston,
  Boston, MA, 1994.

\bibitem{Peller2003}
V.~Peller, Hankel Operators and their Applications, Springer, New York, 2003.

\bibitem{ParkLeiRosen}
H.~Park, L.~Zhang, J.~B. Rosen, Low {Rank Approximation of a Hankel Matrix by
  Structured Total Least Norm}, BIT 39 (1999) 757--779.

\bibitem{ShawPokalaKumaresan}
A.~K. Shaw, S.~Pokala, R.~Kumaresan, Toeplitz and {Hankel} matrix approximation
  using structured approach, in: Proceedings of the 1998 IEEE International
  Conference on Acoustics, Speech and Signal Processing, Seattle, WA, 1998, pp.
  2349--2352.

\bibitem{ChuFunderlicPlemmons}
M.~T. Chu, R.~E. Funderlic, L.~J. Plemmons, Structured low rank approximation,
  Linear Algebra Its Appl. 366 (2003) 157--172.

\bibitem{MarkovskySLRA}
I.~Markovsky, Structured low-rank approximation and its applications,
  Automatica 44 (2008) 891--909.

\bibitem{Markovskyvarproj}
I.~Markovsky, Recent progress on variable projection methods for structured
  low-rank approximation, Signal Processing 96 (2014) 406--419.

\bibitem{Fazelthesis}
M.~Fazel, Matrix rank minimizqtion with applications, Phd thesis, Elec. Eng.
  Dept., Stanford University, 2002.

\bibitem{Fazzi2019}
A.~Fazzi, N.~Guglielmi, I.~Markovsky, {An ODE based method for computing the
  Approximate Greatest Common Divisor of polynomials}, Numer. Algorithms 81
  (2019) 719--740.

\bibitem{ButtaGuglielmiNoschese}
P.~Buttà, N.~Guglielmi, S.~Noschese, Computing the structured pseudospectrum
  of a {Toeplitz} matrix and its extreme points, SIAM J. Matrix Anal. Appl. 33
  (2012) 1300--1319.

\bibitem{UsevichMarkovsky2}
K.~Usevich, I.~Markovsky, Variable projection for affinely structured low-rank
  approximation in weighted $2$-norms, J. Comput. Appl. Math. 272 (2014)
  430--448.

\bibitem{Markovskybook2006}
I.~Markovsky, J.~C. Willems, S.~{Van Huffel}, B.~{De Moor}, Exact and
  Approximate Modeling of Linear Systems: A Behavioral Approach, SIAM, 2006.

\bibitem{VanhuffelLemmerling}
S.~Van~Huffel, P.~Lemmerling, Total Least Squares and Errors-In-Variables
  modeling, Springer-Science + Business-Media, Dordrecht, 2002.

\bibitem{BultheelDeMoor}
A.~Bultheel, B.~De~Moor, Rational approximations in linear systems and control,
  J. Comput. Appl. Math. 121 (2000) 355--378.

\bibitem{Demoor1993}
B.~De~Moor, Structured total least squares and $l_2$ approximation problems,
  Linear Algebra Its Appl. 188--189 (1993) 163--205.

\bibitem{Demoor1994}
B.~De~Moor, Total least squares for affinely structured matrices and the noisy
  realization problem, IEEE Transactions on Signal Processing 42 (1994)
  3104--3113.

\bibitem{Demoor1997}
B.~De~Moor, {Structured Total Least Squares for Hankel Matrices}, in:
  A.~Paulraj, V.~Roychowdhury, C.~D. Schaper (Eds.), Communications,
  Computation, Control, and Signal Processing, Springer, Boston, MA, 1997, pp.
  243--258.

\bibitem{MilanfarVerghese}
P.~Milanfar, G.~C. Verghese, W.~C. Karl, A.~S. Willsky, Reconstructing polygons
  from moments with connections to array processing, IEEE Transactions on
  Signal Processing 43 (1995) 432--443.

\bibitem{MilanfarPutinar}
P.~Milanfar, M.~Putinar, J.~Varah, B.~Gustafsson, G.~Golub, Shape
  reconstruction from moments: Theory, algorithms, and applications, in: Proc.
  SPIE 4116, Advanced Signal Processing Algorithms, Architectures, and
  Implementations X, 2000, pp. 406--416.

\bibitem{Hildebrand}
A.~Hildebrand, Introduction to Numerical analysis, McGraw-Hill, New York, 1956.

\bibitem{Kato}
T.~Kato, Perturbation theory for linear operators, Springer-Verlag, 1995.

\bibitem{Markovskycommondyn}
I.~Markovsky, T.~Liu, A.~Takeda, Subspace methods for multi-channel
  sum-of-exponentials common dynamics estimation, in: Proc. of the IEEE Conf.
  on Decision and Control, Nice, France, 2019, pp. 2672--2675.

\end{thebibliography}

\end{document}